\newcommand{\Prob}{{\mathbb P}}
\newcommand{\D}{{\mathrm d}}
\newcommand{\Expe}{{\mathrm e}}
\renewcommand{\a}{\alpha}
\newcommand{\R}{\mathbb R}
\newcommand{\ceil}[1]{\lceil{#1}\rceil}
\newcommand{\RV}{\operatorname{RV}}
\newcommand{\floor}[1]{\lfloor{#1}\rfloor}
\newtheorem{theorem}{Theorem}[section]
\newtheorem{proposition}[theorem]{Proposition}
\newtheorem{lemma}[theorem]{Lemma}
\newtheorem{remark}[theorem]{Remark}
\theoremstyle{definition}
\newtheorem{condition}[theorem]{Condition}
\numberwithin{equation}{section}
\newcommand\convdistr{
  \xrightarrow{\:\scriptscriptstyle\smash{\mathcal{D}}\:}}
  \newcommand{\e}{\mathrm{e}} 
\newcommand\eqdistr{\stackrel{\:\scriptscriptstyle\smash{\mathcal{D}}\:}{=}}
\newcommand{\oh}{{\mathrm{o}}} 
\newcommand{\Oh}{{\mathrm{O}}} 
\newcommand{\abs}[1]{\lvert{#1}\rvert}
\renewcommand{\le}{\leqslant}
\renewcommand{\ge}{\geqslant}
\renewcommand{\leq}{\leqslant}
\renewcommand{\geq}{\geqslant}
\definecolor{asparagus}{rgb}{0.53, 0.66, 0.42}
\definecolor{olive}{rgb}{0.5, 0.5, 0.0}
\definecolor{antiquefuchsia}{rgb}{0.57, 0.36, 0.51}
\definecolor{golden(brown)}{rgb}{0.6, 0.4, 0.08} %
\definecolor{golden(darkbrown)}{rgb}{0.48, 0.32, 0.064} %
\definecolor{gray-asparagus}{rgb}{0.27, 0.35, 0.27} 
\definecolor{glaucous}{rgb}{0.38, 0.51, 0.71}
\definecolor{airforceblue}{rgb}{0.36, 0.54, 0.66}
\definecolor{blue(munsell)}{rgb}{0.0, 0.5, 0.69}
\definecolor{burntorange}{rgb}{0.8, 0.33, 0.0}
\title{On the longest gap between power-rate arrivals}
\author[S. Asmussen]{S\o{}ren Asmussen}
\address{Aarhus University, Department of Mathematical Sciences, Ny Munkegade, DK-8000 Aarhus C, Denmark}
\email{asmus@math.au.dk}
\author[J. Ivanovs]{Jevgenijs Ivanovs}
\address{Aarhus University, Department of Mathematical Sciences, Ny Munkegade, DK-8000 Aarhus C, Denmark}
\email{jevgenijs.ivanovs@math.au.dk}
\author[J. Segers]{Johan Segers}
\address{Universit\'{e} catholique de Louvain, Institut de Statistique, Biostatistique et Sciences Actuarielles, Voie du Roman Pays~20, B-1348 Louvain-la-Neuve, Belgium}
\email{johan.segers@uclouvain.be}
\date{\today}
\subjclass[2010]{Primary 60G70, 60G55}
\keywords{Gumbel distribution, inhomogeneous Poisson process, point processes, records, regular variation, weak convergence}
\begin{document}
\maketitle

\begin{abstract}
Let $L_t$ be the longest gap before time $t$ in an inhomogeneous Poisson process
with rate function $\lambda_t$ proportional to $t^{\alpha-1}$ for some $\alpha\in(0,1)$. It is shown that
$\lambda_tL_t-b_t$ has a limiting Gumbel distribution for  suitable constants $b_t$ and that
the distance of this  longest gap from $t$ is   asymptotically of the form $(t/\log t)E$ for an exponential
random variable $E$. The analysis is performed via weak convergence of related point processes. 
Subject to a 
weak technical condition, the results
are extended to include a slowly varying term in $\lambda_t$.
\end{abstract}

\section{Introduction and main results}
\label{S:Intr}

Let $(\mathcal N_t)_{t\geq 0}$ be an inhomogeneous Poisson process with rate $\lambda_t$ such that $\Lambda(t) = \int_0^t \lambda_s \, \D s < \infty$ for all $t>0$. The epochs of $\mathcal N$, in increasing order, are denoted by $T_i$, $i=1,2,\ldots$, so that the gaps are given by $R_i=T_i-T_{i-1}$ with~$T_0=0$. The objects of study of the present paper are the longest gap, $L_t$, before time $t$ and its right-end position, $\sigma_t$: 
\begin{align}
\label{eq:Lt}
  L_t 
  &= 
  \max_{i\geq 1}\{R_i:T_i\leq t\}, \\
\label{eq:sigmat}
  \sigma_t 
  &=
  \min_{i\geq 1}\{T_i : R_i = L_t\}.
\end{align}
Note that the definition does not include the gap straddling time $t$, but this is in fact
unimportant for our
asymptotic results, see Remark~\ref{rem:count-the-gap}.
 
In the homogeneous case, the discrete time analogue of the longest gap 
is the longest run, $L_n$, of ones 
before time~$n$ in a Bernoulli$(p)$
sequence.   The study of the longest run has a long history going back to, among others,
\cite{erdos, vonMises}; a recent survey is in \cite{balakrishnan2011runs}.
A main result is that $L_n$ is of order $\log_{1/p} n$. In the homogeneous Poisson case, $\lambda_t \equiv \lambda$, there is a neat analogue of this: 
\begin{equation}\label{18.11a}
\lambda L_t-\log (\lambda t)\,\convdistr\,G\ \text{as }t\to\infty\,,
\end{equation}
where $G$ is Gumbel with cumulative distribution function (cdf) $\Prob(G\le x)=$ $\exp(-\e^{-x})$. The proof is equally neat:
with $M^\pm_t=$ $\max_{i\le \lambda t(1\pm\epsilon)}R_i$ for $\epsilon > 0$ one has $M^-_t\le L_t\le M^+_t$ for large $t$ with high probability. 
Further,  by standard extreme value theory, the random variables $\lambda M^\pm_t-\log \{\lambda t(1\pm\epsilon)\}$ 
 have Gumbel limits as $t \to \infty$, so one can just let first $t$ tend to infinity and next
$\epsilon$ tend to $0$. We provide some further comments and references in Remark~\ref{rem:homogeneous} below.

As mentioned above, our interest is in time inhomogeneity. This may occur in at least two ways.
Firstly, one may consider fluctuations around a long-term average which is conveniently modelled in a hidden Markov setting,
see \cite{antzoulakos1999waiting,Olebook,fu1994distribution}. Secondly, the rates $\lambda_t$ may exhibit a
systematic deterministic trend. The only reference here seems to be~\cite{AIRN} (though cf.\ also~\cite{Karlin}),
continuing a study of \cite{A5} related to problems from computer reliability. The results in~\cite{AIRN}
are of large deviations type, giving asymptotic estimates of $\Prob(L_t<\ell)$ in the rare-event
setting where $t\to\infty$ with $\ell$ fixed.  Our concern here is the typical behaviour, that is, analogues
of~\eqref{18.11a}.

As in~\cite{AIRN}, the quantitative form of $\lambda_t$ is crucial both for the form of the results and
the difficulty of the analysis. First, we concentrate on what is maybe the simplest form, a power function $\lambda_t = \lambda_1 t^{\alpha-1}$, and then provide extensions to regularly varying functions. The power function is a rather natural choice with which to start the analysis, and already this case presents 
substantial challenges. The case $\alpha=1$ is settled by \eqref{18.11a} and the behaviour when
$\alpha>1$ or $\alpha\le 0$  is easily resolved, see Remark~\ref{Rem:18.11a} below.
Thus what is left for analysis is the case $0<\alpha<1$, and here our result
is the following:

\begin{theorem}
\label{thm:main}
Let $({\mathcal N}_t)_{t \ge 0}$ be an inhomogeneous Poisson process with rate $\lambda_t = \lambda_1 t^{\a-1}$ with $\lambda_1 > 0$ and $\a\in(0,1)$. For $L_t$ and $\sigma_t$ as in \eqref{eq:Lt} and \eqref{eq:sigmat}, we have
\[
  \left(\lambda_t L_t-b_t, \, \frac{t-\sigma_t}{t}\log t\right)
  \convdistr 
  \left(G,E_{\a(1-\a)}\right)
  \qquad \text{ as }t\to\infty,
\]
where $b_t=\a\log t-\log\log t-\log(\a(1-\a)/\lambda_1)$ and $G,E_{\a(1-\a)}$ are independent random variables: $G$ is Gumbel and $E_{\a(1-\a)}$ is exponential with rate $\a(1-\a)$.
\end{theorem}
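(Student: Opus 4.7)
My plan is to establish the joint convergence by proving weak convergence of the point process
\[
  \Pi_t = \sum_{i\geq 1:\,T_i \le t} \delta_{\bigl(\lambda_t R_i - b_t,\,(t-T_i)(\log t)/t\bigr)}
\]
on $E = (x_0,\infty)\times[0,\infty)$ to a Poisson point process $\Pi^\star$ on $E$ with intensity $\e^{-y}\,\dd y\otimes \alpha(1-\alpha)\e^{-\alpha(1-\alpha)z}\,\dd z$. Once this is done, the theorem follows quickly: the point of $\Pi_t$ with the largest first coordinate is exactly $\bigl(\lambda_tL_t - b_t,\,(t-\sigma_t)(\log t)/t\bigr)$; the first-coordinate projection of $\Pi^\star$ has intensity $\e^{-y}\,\dd y$, so its maximum is Gumbel; and because the intensity factorizes, the second coordinate of the argmax is independent of the maximum and exponential with rate $\alpha(1-\alpha)$. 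Sending $x_0\to -\infty$ then yields the claim on all of $\R\times[0,\infty)$.

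The intensity of $\Pi_t$ comes from the joint density of consecutive epochs of $\mathcal{N}$: for $0<s_1<s_2$,
\[
  \sum_{i\ge 2}\Prob(T_{i-1}\in\dd s_1,\,T_i\in\dd s_2)
  = \lambda_{s_1}\lambda_{s_2}\exp\!\bigl(-[\Lambda(s_2)-\Lambda(s_1)]\bigr)\,\dd s_1\dd s_2,
\]
plus a boundary contribution from $i=1$ located at $s_1=0$ which is asymptotically negligible. Setting $r=s_2-s_1$, $y=\lambda_tr-b_t$ and $z=(t-s_2)(\log t)/t$, I would Taylor-expand $\Lambda(s_2)-\Lambda(s_2-r)=\lambda_1[s_2^\alpha-(s_2-r)^\alpha]/\alpha$ in the regime where $y,z$ stay bounded; using $\alpha<1$ and $r/s_2\to 0$, this quantity converges to $b_t+y+\alpha(1-\alpha)z$, while $\lambda_{s_1}\lambda_{s_2}\to\lambda_t^2$, and the Jacobian of $(s_1,s_2)\mapsto(y,z)$ contributes a factor $t/(\lambda_t\log t)$. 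Combining these with the identity $\e^{-b_t}=\alpha(1-\alpha)t^{-\alpha}(\log t)/\lambda_1$, the intensity of $\Pi_t$ in the relevant window converges pointwise to $\alpha(1-\alpha)\e^{-y}\e^{-\alpha(1-\alpha)z}\,\dd y\dd z$, which is exactly the target.

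To upgrade this pointwise intensity convergence to weak convergence of point processes I would verify Kallenberg's criterion: convergence of the mean measure on relatively compact rectangles together with asymptotic negligibility of $\Prob(\Pi_t(A)\ge 2)$ for small rectangles $A$. Because $\mathcal N$ is itself Poisson it is equally natural to check convergence of the Laplace functional $\Exp[\exp(-\Pi_t f)]$ for $f\in C_c^+(E)$ by a direct integral computation, in which the dominant contribution is read off from the intensity calculation together with dominated convergence.

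The hard part, I expect, is ruling out a contribution to the maximum from gaps with $T_i$ far from $t$, where the Taylor expansion above breaks down and the typical scale of $\lambda_t R_i$ is set by $\lambda_t/\lambda_{T_i}=(t/T_i)^{1-\alpha}$ rather than by $1$. To handle this I would bound
\[
  \Prob\!\left(\max_{i:\,T_i \le (1-Z/\log t)t}\lambda_t R_i > b_t + x\right)
\]
and show that it tends to $0$ as $t\to\infty$ and then $Z\to\infty$, by a dyadic decomposition $T_i\in[2^{-k}t,2^{-k+1}t]$ for $k\ge 1$, applying Campbell's formula on each piece and exploiting the estimate $\Lambda(s+r)-\Lambda(s)\ge c(\alpha)\lambda_s r$. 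The resulting geometric decay in $k$, together with the Gumbel-scale choice of $b_t$, should yield the required negligibility and close the argument.
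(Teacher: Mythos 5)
Your reduction of the theorem to weak convergence of a point process, followed by the continuous-mapping extraction of $\bigl(\lambda_t L_t - b_t,\,(t-\sigma_t)(\log t)/t\bigr)$ as the coordinates of the record point, is the paper's own strategy (Theorem~\ref{thm:point_proc} and the discussion after it). Your intensity computation is correct and is a genuinely different route to the limiting mean measure: you stay in the original time scale and Taylor-expand the joint density of consecutive epochs $(T_{i-1},T_i)$ directly, whereas the paper first time-changes via $T_i'=\Lambda(T_i)$ to a unit-rate homogeneous Poisson process, so that the increments $X_i=T_i'-T_{i-1}'$ are i.i.d.\ standard exponentials, then uses the mean-value theorem and the law of the iterated logarithm (Lemma~\ref{lem:RbyX}) to write $R_i\approx\alpha^{-1}i^{\gamma}X_i$ with $\gamma=(1-\alpha)/\alpha$, reducing everything to the weighted-exponential problem of Section~\ref{sec:exponential}.

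The genuine gap in your proposal is in passing from intensity convergence to distributional convergence of $\Pi_t$. The pairs $(T_{i-1},T_i)$ are not independent---$T_i=T_{i-1}+R_i$ and the gaps form a Markov chain---so $\Pi_t$ is not a superposition of independent one-point processes, and ``convergence of the mean measure plus negligibility of $\Prob(\Pi_t(A)\ge 2)$'' is \emph{not} by itself a Poisson-limit criterion; an asymptotic-independence input is also required. The criterion the paper actually uses (Kallenberg's, cited as~\cite{kallenberg_SPA}) asks for convergence of avoidance probabilities $\Prob(\xi_t(B)=0)$ on finite unions of rectangles, and computing those for your $\Pi_t$ is precisely the hard part your sketch leaves open. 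The paper's time change is exactly what supplies the missing independence: after it, the underlying randomness is a genuine null array of independent points, Grigelionis' theorem gives Proposition~\ref{prop:point_discrete}, and Lemmas~\ref{lem:points_close} and~\ref{lem:bounding} transfer the conclusion back to $\xi_t$. To complete your direct route you would need either to compute $\Prob(\Pi_t(B)=0)$ head-on (essentially a coverage computation in the spirit of Remark~\ref{rem:homogeneous}) or to run a Chen--Stein/Poisson-approximation argument that explicitly controls the Markovian dependence between neighbouring gaps; neither appears in the proposal. Your dyadic truncation for $T_i$ far from $t$ is a sound idea but is more elaborate than needed: the paper's Lemma~\ref{lem:bounding} handles this in one stroke by bounding $\lambda_t\max_{i\le n/2}R_i - b_t \le 2^{-\gamma}\alpha\log t\,(1+\oh_p(1)) - b_t\to-\infty$.
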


In fact, we prove a much more general result establishing weak convergence of a sequence of point processes, from which Theorem~\ref{thm:main} easily follows.
Here and as usual, convergence in distribution  of point processes is with respect to the vague topology in the space of Radon measures on~$(-\infty,\infty]^2$.
\begin{theorem}\label{thm:point_proc}
Under the assumptions of Theorem~\ref{thm:main} consider the point process~$\xi_t$ on $(-\infty,\infty]^2$ consisting of the points \[\left(\lambda_t R_i-b_t,\frac{t-T_i}{t}\log t\right)\qquad i=1,2,\ldots\]
Then $\xi_t\convdistr \xi$ as $t\rightarrow\infty$, where $\xi$ is a Poisson point process with intensity measure
\[\mu(\D x,\D z)=\e^{-x}\D x\times \a(1-\a)\e^{-\a(1-\a) z}\D z\,.\]
\end{theorem}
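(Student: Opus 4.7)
The plan is to verify the two hypotheses of Kallenberg's theorem for Poisson convergence in the vague topology on $(-\infty,\infty]^2$: since the limit intensity $\mu$ is diffuse and Radon, it suffices to show, for every relatively compact rectangle $B=(x_1,x_2]\times(z_1,z_2]$ with $x_1,z_1$ finite, that (a)~$\Exp[\xi_t(B)]\to\mu(B)$ and (b)~$\Prob(\xi_t(B)=0)\to\e^{-\mu(B)}$.

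For (a) I would use Campbell's formula: the mean intensity of the pair process $\{(T_{i-1},T_i)\}_{i\ge 1}$ of consecutive arrivals on $\{(s,s'):0<s<s'\}$ equals $\lambda_s\lambda_{s'}\exp(-\int_s^{s'}\lambda_u\dd u)$. Parametrising by $(s',r=s'-s)$ and changing variables to $(x,z)=(\lambda_t r-b_t,(t-s')\log t/t)$, whose Jacobian determinant is $t/(\lambda_t\log t)$, a Taylor expansion yields
\[
  \int_{s'-r}^{s'}\lambda_u\dd u=\lambda_{s'}r(1+\oh(1)),\qquad \frac{\lambda_{s'}}{\lambda_t}=\Bigl(1-\frac{z}{\log t}\Bigr)^{\alpha-1}=1+\frac{(1-\alpha)z}{\log t}+\oh\Bigl(\frac{1}{\log t}\Bigr),
\]
together with $b_t/\log t\to\alpha$, so the exponent reduces to $x+b_t+\alpha(1-\alpha)z+\oh(1)$. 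Using the identity $\e^{-b_t}=\alpha(1-\alpha)t^{-\alpha}\log t/\lambda_1$ together with the prefactor $\lambda_{s'-r}\lambda_{s'}\cdot t/(\lambda_t\log t)\sim\lambda_1 t^\alpha/\log t$, the integrand in the new coordinates converges pointwise on $B$ to $\alpha(1-\alpha)\e^{-x-\alpha(1-\alpha)z}$, yielding~(a).

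For (b) I would run a blocking argument. Let $I=\{s:(t-s)\log t/t\in(z_1,z_2]\}$ and $J=\{r:\lambda_t r-b_t\in(x_1,x_2]\}$; then $|I|$ has order $t/\log t$ while $\sup J$ has order $t^{1-\alpha}\log t$, so $\sup J/|I|\to 0$. Partition $I$ into $K_t$ equal subintervals $I_1,\ldots,I_{K_t}$ with $K_t\to\infty$ and $|I|/K_t\gg\sup J$ (any $K_t=\oh(t^\alpha/(\log t)^2)$ works, e.g.\ $K_t=\lfloor t^{\alpha/2}\rfloor$), and set $A_k=\{\exists\,i:T_i\in I_k,R_i\in J\}$. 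Part (a) applied block by block gives $\sum_k\Prob(A_k)\to\mu(B)$ and $\max_k\Prob(A_k)\to 0$. Since $A_k$ depends on $\mathcal N$ only through its restriction to $I_k$ together with a left buffer of width $\sup J$, after slightly shrinking each $I_k$ so that these enlargements are pairwise disjoint (which perturbs $\sum_k\Prob(A_k)$ by~$\oh(1)$) the events $A_k$ become independent by the Poisson independent-increments property, whence $\Prob(\bigcap_k A_k^c)\to\exp(-\mu(B))$.

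The main obstacle is the boundary bookkeeping in step (b), namely: (i) that at most one index $i$ contributes per block with overwhelming probability — this is a direct second-moment estimate based on the joint mean intensity of two nearby consecutive pairs of $\mathcal N$ — and (ii) that events where a gap of length in $J$ straddles a block boundary are negligible, which follows from the scale separation $\sup J\ll|I|/K_t$ combined with the mean bound of~(a). Everything else reduces to routine calculations using the explicit Poisson structure of $\mathcal N$.
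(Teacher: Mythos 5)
Your plan is sound and genuinely different from the paper's route. The paper proves the result by first changing time via $T_i'=T_i^\alpha$ to reduce to i.i.d.\ unit exponentials $X_i$, establishing the analogous point-process limit for the weighted exponentials $i^\gamma X_i$ (Proposition~\ref{prop:point_discrete}, via Grigelionis's criterion for null arrays), then transferring this to the Poisson gaps by an almost-sure comparison of the two families of points on compact rectangles (Lemmas~\ref{lem:RbyX} and~\ref{lem:points_close}, using the LIL), a truncation lemma (Lemma~\ref{lem:bounding}), and Kallenberg's avoidance-plus-$\Prob(>1)$ criterion from~\cite{kallenberg_SPA}. You instead stay entirely inside the inhomogeneous Poisson process: you get the mean measure by Campbell's (Palm) formula for the consecutive-pair process and an explicit change of variables, and the avoidance probabilities by a blocking argument that exploits the Poisson independent-increments structure directly. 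Both are valid; your version is more self-contained and avoids the weighted-exponential machinery, while the paper's is better adapted to producing the standalone discrete result of Section~2 and to the regular-variation extension of Section~4, where explicit Taylor expansions of $\Lambda$ are less convenient than the time-change approach. Two small caveats on your plan: (i)~Kallenberg's criterion should really be checked on a dissecting ring, i.e.\ on finite unions of rectangles, not just single rectangles; your blocking argument extends to this with essentially no change, but it should be said. (ii)~Your rectangles may have $x_2=\infty$ or $z_2=\infty$ (they must, to form a DC-system on $(-\infty,\infty]^2$), in which case the pointwise convergence in~(a) needs a domination/truncation step, and your block counts $K_t$ and buffer widths in~(b) need to be chosen after truncation --- this is precisely what the paper's Lemma~\ref{lem:bounding} handles. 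Neither caveat threatens the argument; they are the same kind of boundary bookkeeping you already flag.
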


Importantly, in Theorem~\ref{thm:main} we consider the compactified Euclidean plane $(-\infty,\infty]^2$ so that the set $[x,\infty]\times [-z,\infty]$ is compact. The points of $\xi_t$ in this set are affine transformations of couples $(R_i,T_i)$ such that $R_i\geq (x+b_t)/\lambda_t$ and $T_i\leq t(1+z/\log t)$. 
Hence our result concerns all large enough gaps of~$\mathcal N$ up to the time $t+\Oh(t/\log t)$. Furthermore, since vague convergence of point measures implies convergence of the respective points in any compact set~\cite[Prop.\ 3.13]{resnick}, we conclude that the map 
\[\sum_i\delta_{(x_i,z_i)}\mapsto (x,z), \qquad x=\max\{x_i:z_i\geq 0\},\,z=\max\{z_i:x_i=x\}\] is continuous apart from possible discontinuities at point measures with $x_i=x_j$ or $z_i=0$ for some $i\neq j$. Since $\xi$ is not of such form a.s.,  the continuous mapping theorem 
gives that $(\lambda_t L_t-b_t,(1-\sigma_t/t)\log t)\convdistr (X,Z)$, where $(X,Z)$ has the distribution arising from the application of the above map to~$\xi$. A standard calculation reveals that for $z>0$ we have 
\begin{align}
\label{eq:XZ}
  \Prob(X\in\D x,Z\in\D z)
  &=\Prob(\xi(\D x\times\D z)=1, \, \xi((x,\infty)\times(0,\infty))=0)\\
\nonumber
  &= \mu(\D x,\D z)\exp\{-\mu((x,\infty)\times(0,\infty))\} \\
\nonumber
  &= \mu(\D x,\D z)\exp(-\e^{-x}),
\end{align}
proving Theorem~\ref{thm:main}; see also the light-gray region in Figure~\ref{fig:points}.

\begin{remark}\label{rem:count-the-gap}\rm
In order to give a feeling for some further results we consider the first gap exceeding~$L_t$ and its time of occurrence:
$(L_t^+,\sigma^+_t)=(R_{i^+_t},T_{i^+_t})$, where $i^+_t=\min\{i\geq 1:R_i>L_t\}$ is the corresponding index.
From Theorem~\ref{thm:point_proc} and the continuous mapping theorem applied to the appropriate map, we find that 
\[
  \left(\lambda_t L_t-b_t,\, \lambda_t L^+_t-b_t, \, \frac{t-\sigma_t}{t}\log t,\, \frac{\sigma^+_t-t}{t}\log t\right)
  \convdistr 
  \left(X,X^+,Z,Z^+\right),
\]
where the conditional distribution of $X^+,Z^+$ is easily  identified to be
\begin{align}
\label{eq:XZ:plus}
  \Prob(X^+\in \D x^+&,Z^+\in \D z^+ \mid X=x,Z=z)\\
\nonumber
  &=\Prob(\xi(\D x^+\times(-\D z^+))=1,\xi((x,\infty)\times(-z^+,0))=0)\\
\nonumber
  &=
  \mu(\D x^+,-\D z^+)\exp(-\e^{-x}(\e^{\a(1-\a)z^+}-1))
\end{align}
for $x^+>x$ and $z^+>0$; see the dark-grey region in Figure~\ref{fig:points}.
\begin{figure}
\includegraphics[width=0.4\textwidth]{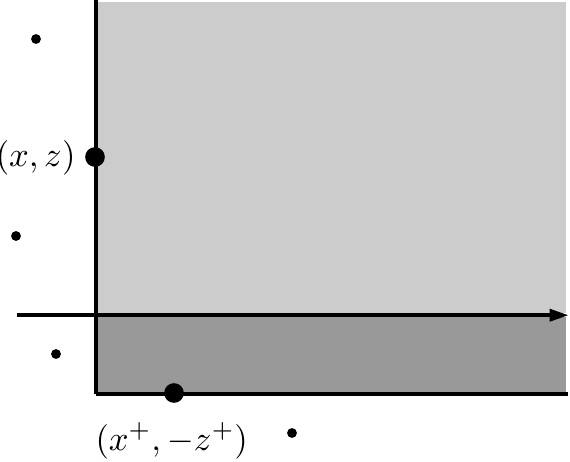}
\caption{The points $(x, z)$ and $(x^+, -z^+)$ and the associated empty regions $(x, \infty) \times (0, \infty)$ in~\eqref{eq:XZ} (light-gray) and $(x, \infty) \times (-z^+, 0)$ in~\eqref{eq:XZ:plus} (dark-gray), respectively.}
\label{fig:points}
\end{figure}
 In particular, we find after some computation that  
 $Z^+\eqdistr E_{\a(1-\a)}\eqdistr Z$. 
One may proceed even further and obtain convergence of extremal processes (on the Skorokhod space of two-sided paths) identifying the record gaps and their times, see~\cite[Prop.\ 4.20]{resnick} for the classical setting.

Finally, note that $L_t^+=\oh_p(t/\log t)$ and so $\Prob(\sigma_t^+-L_t^+>t)\to 1$, showing that the corresponding gap does not straddle time~$t$ in the limit.
\end{remark}

When trying to adapt the above proof of \eqref{18.11a}, with scale constant $\lambda_1 = \a$ say, one quite easily gets $\mathcal N_t\approx t^\a$, which gives a rough estimate of $L_t$ in terms of $\max_{i < t^\a} R_i$. The difficulty is that these interarrival times $R_i$ are no longer
independent nor exponentially distributed. Nevertheless, the $R_i$ are not too far from exponential random variables with rates $\lambda_{T_i} \approx \a \, i^{(\a-1)/\alpha}$, because $T_i\approx i^{1/\alpha}$ for large~$i$. Hence our first step is to consider extreme value theory for sequences 
of i.i.d.\ random variables equipped with weights. Some references in that direction are 
\cite{Kostya99,Gouet15,DeHaan87,Smith88} and, of particular relevance for us,
\cite[Thm.\ 4.1]{WN10}, from which the following result can be extracted:

\begin{proposition}\label{prop:WN}
\label{prop:max}
Let $X_1, X_2, \ldots$ be independent unit exponential random variables and let $\gamma \in (0, \infty)$. 
Then with $M_n=\max_{i=1,\ldots,n}\{i^{\gamma}X_i\}$ we have
\[
  \frac{M_n}{n^{\gamma}} - \beta_n 
  \convdistr G, \qquad \text{ as }n \to \infty,
\]
where $\beta_n = \log (n/\gamma) - \log \log n$ and $G$ is a Gumbel random variable.
\end{proposition}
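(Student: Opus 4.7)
The plan is to compute the joint CDF of $M_n$ directly from independence. Writing $\Prob(M_n \le y) = \prod_{i=1}^n (1 - \e^{-y/i^\gamma})$, I would introduce the centering $y_n(x) = n^\gamma(\beta_n + x)$ and set $p_{n,i}(x) := \e^{-(n/i)^\gamma(\beta_n + x)}$, so that
\[
  -\log \Prob(M_n \le y_n(x))
  = -\sum_{i=1}^n \log\bigl(1 - p_{n,i}(x)\bigr).
\]
The largest term is $p_{n,n}(x) = \e^{-(\beta_n + x)} \sim \gamma \e^{-x}(\log n)/n \to 0$, so the elementary bound $\abs{\log(1-u)+u} \le u^2$ for $u \le 1/2$ reduces the task to showing
\[
  S_n(x) := \sum_{i=1}^n p_{n,i}(x) \to \e^{-x} \qquad (n \to \infty),
\]
because the quadratic remainder is controlled by $(\max_i p_{n,i}) \cdot S_n(x) = \oh(1)$ as soon as this convergence holds.

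Next I would interpret $S_n(x) = \sum_{i=1}^n f_n(i/n)$ where $f_n(u) = \e^{-u^{-\gamma}(\beta_n + x)}$ is increasing on $(0, 1]$ with $f_n(0^+) = 0$ and $f_n(1) = \e^{-(\beta_n + x)}$. The monotone Riemann-sum inequality yields
\[
  0 \le S_n(x) - n \int_0^1 f_n(u)\,\D u \le f_n(1) = \Oh((\log n)/n),
\]
so it remains to evaluate the integral. Substituting $v = u^{-\gamma}$ and then $w = (\beta_n + x)(v-1)$ transforms it into
\[
  \int_0^1 f_n(u)\, \D u
  = \frac{\e^{-(\beta_n + x)}}{\gamma(\beta_n + x)} \int_0^\infty \e^{-w} \Bigl(1 + \frac{w}{\beta_n + x}\Bigr)^{-1 - 1/\gamma} \D w,
\]
and dominated convergence sends the remaining integral to $1$ as $\beta_n \to \infty$.

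Combining the two estimates and using $\e^{-\beta_n} = \gamma(\log n)/n$ gives $n\int_0^1 f_n(u)\,\D u \sim \frac{\log n}{\beta_n + x}\, \e^{-x} \to \e^{-x}$, where the last convergence holds because $\beta_n \sim \log n$. This establishes $S_n(x) \to \e^{-x}$ and hence the Gumbel limit. The most delicate step will be the Laplace-type evaluation of the integral: a careless first-order approximation loses the $1/(\beta_n + x)$ prefactor, so one must retain it explicitly. The specific centering $\beta_n = \log(n/\gamma) - \log \log n$ is tailored precisely so that this prefactor, multiplied by the leading $\log n$ coming from $\e^{-\beta_n}$, converges to~$1$; the $\log \log n$ correction is exactly what is needed to cancel the otherwise divergent $\log n$ factor.
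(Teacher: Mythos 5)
Your proof is correct and self-contained, which is notable because the paper itself gives \emph{no} proof of Proposition~\ref{prop:max}: it merely says the result ``can be extracted'' from Theorem~4.1 of Withers \& Nadarajah~\cite{WN10}, a general statement about maxima of i.i.d.\ samples subject to trends in location and scale. Your route is a direct computation: writing $-\log\Prob\bigl(M_n \le n^\gamma(\beta_n+x)\bigr)$ as a sum, using the second-order bound $0 \le -\log(1-u)-u \le u^2$ (valid since $\max_i p_{n,i}(x) = p_{n,n}(x) \to 0$) to reduce the problem to $S_n(x) \to \e^{-x}$, replacing the monotone Riemann sum by $n\int_0^1 \e^{-u^{-\gamma}(\beta_n+x)}\,\D u$ with error $\Oh((\log n)/n)$, and then evaluating the integral by the substitutions $v=u^{-\gamma}$, $w=(\beta_n+x)(v-1)$ and dominated convergence. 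All the arithmetic checks out: $\e^{-\beta_n} = \gamma(\log n)/n$ produces precisely the $\log n$ needed to cancel the $1/(\beta_n+x)$ prefactor coming out of the Laplace-type integral, leaving $\e^{-x}$ in the limit, so indeed the $-\log\log n$ in $\beta_n$ is doing exactly the work you say. Compared with the paper's appeal to a general theorem, your argument is more elementary and makes the mechanism behind the centering constants completely transparent; it also has the minor virtue of being easy to adapt to the regularly-varying weights $v(i)$ of Section~4, where the paper instead builds on Proposition~\ref{prop:max} via Lemma~\ref{lem:tauto1} and Condition~\ref{cond:loc}.
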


Our analysis supplements this result by identifying the location of the maximum and providing the analogue of Theorem~\ref{thm:point_proc}. This location is trivially uniform for i.i.d.\ sequences or homogeneous Poisson processes, but has an interesting limiting distribution in the nonhomogeneous case. We also give an extension to
weights in Proposition~\ref{prop:WN} and rates in Theorem~\ref{thm:main} 
which are regularly varying  rather than of simple
power form. Such an extension is of course expected, but the proof is surprisingly complicated,
and in fact, we need some regularity conditions on the slowly varying function.

\begin{remark}\label{rem:homogeneous}\rm
	Despite its simplicity, \eqref{18.11a}  does not seem to have been formulated in the longest run/gap literature. Note that its analogue fails in the Bernoulli setting, because the extreme value behaviour
	of geometric random variables is more complicated than the one of exponential random variables, cf.~\cite[pp.\,24--25]{LLR}.
	
	However, as pointed out by an associate editor and a referee, there are a number of related results in the stochastic geometry literature. Most of these are more general and go deeper, but \eqref{18.11a} can be deduced after some reformulation.
	For example, consider the probability of full coverage of the interval $[0,1]$ in the Boolean model~\cite{hall} on $\R$ with deterministic segments of length $r^{(t)}=(x+\log(\lambda t))/(\lambda t)$ arriving at the rate $\lambda^{(t)}=\lambda t$. By rescaling time we find that  
	\begin{align*}
	\Prob(\lambda L_t-\log(\lambda t)\leq x)
	&=\Prob(L_t/t\leq r^{(t)}) \\
	&=\Prob^{(t)}(\text{full coverage of }[0,1])+\oh(1)\quad\text{as }t\to \infty,
	\end{align*}
	which converges to $\exp(-\Expe^{-x})$ according to~\cite[Thm.\ 2.5]{hall}. For related results in the nonuniform setting see~\cite{hall_nonuniform,husler} and~\cite{molchanov} for more recent work.
	Furthermore, \eqref{18.11a} also follows from~\cite[(2c)]{calka} specifying the limit behaviour of the maximal circumscribed radius of a Poisson--Voronoi tessellation.
	\end{remark}

\begin{remark}\label{Rem:18.11a}\rm
When $\alpha>1$,~\cite{AIRN} gives that the increasing process $L_t$ has a proper limiting distribution, of $L_\infty$, say. 
That is, from~(7) in \cite{AIRN} it follows that $\Prob(L_\infty\geq \ell)\to 0$ as $\ell\to \infty$.
The case $\alpha<0$
is trivial since then $\int_1^\infty\lambda_t\,\D t<\infty$, so that the number of epochs in $[1,\infty)$  is finite with probability~1. 
The boundary case $\alpha=0$ is also easy: if $\lambda_t = \lambda_1 / t$ for some scale constant $\lambda_1 > 0$, then
\begin{equation}\label{18.11b}
\left(\frac{L_t}{t},\frac{\sigma_t}{t}\right)\stackrel{\mathcal{D}}{=}(L_1,\sigma_1).
\end{equation}
Indeed, fix $t > 0$ and define the time-changed process $\mathcal{N}'$ by $\mathcal N'_x=\mathcal N_{tx}$ for $x \ge 0$. Its intensity measure, $\Lambda'$, satisfies $\Lambda'(x,y)=\Lambda(tx,ty)=\lambda_1 \log ((ty)/(tx)) = \Lambda(x,y)$ for any $0<x<y$. It follows that ${\mathcal N}'$ has the same distribution as ${\mathcal N}$, and it is then clear that $(L_{t}/t,\sigma_t/t)$ has the same distribution as $(L_1,\sigma_1)$. 

Finally, observe that $t-\sigma_t$ is of order~$t$ when $\alpha=0$ or $1$, the two boundary cases in Theorem~\ref{thm:main}.
In contrast, Theorem~\ref{thm:main} gives the smaller order $t/\log t$ when $\a\in(0,1)$. 
Therefore, it is intuitive that the limiting random variable $E_{\a(1-\a)}$ must increase to 
$\infty$ as $\a$ approaches~0 or~1. This is indeed the case.
\end{remark}


\section{Weighted exponentials}
\label{sec:exponential}

As in Proposition~\ref{prop:WN}, we consider a sequence $X_1, X_2, \ldots$ of independent, unit exponential random variables. We fix $\gamma > 0$ and let
\begin{equation}
\label{eq:MnX}
  M_n = \max_{i=1,\ldots,n} \{ i^{\gamma} X_i \} 
  \qquad \text{and} \qquad
  \tau_n = \min \{ i = 1, \ldots, n : i^{\gamma} X_i = M_n \},
\end{equation}
denote the partial maximum of the weighted sequence $(i^\gamma X_i)_{i \ge 1}$ and the location of that maximum, respectively. In the i.i.d.\ case, $\gamma = 0$, the random variable $\tau_n $ is uniformly distributed on $\{1,\ldots,n\}$.
Since  the weights $i^\gamma$ increase to infinity, one would expect that $\tau_n/n\to 1$ as $n \to \infty$. The following proposition makes this precise.

\begin{proposition}
\label{prop:loc} 
For $M_n$ and $\tau_n$ as in \eqref{eq:MnX}, we have
\[ 
  \left( 
    \frac{M_n}{n^\gamma} - \beta_n,
    \frac{n-\tau_n}{n}\log n 
  \right)
  \convdistr 
  \left(G, E_\gamma\right)
  \qquad \text{ as }n\to \infty,
\]
where $\beta_n = \log(n/\gamma) - \log \log(n)$ and where $G,E_\gamma$ are independent random variables: $G$ is Gumbel and $E_\gamma$ is exponential with rate~$\gamma$.
\end{proposition}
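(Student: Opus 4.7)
The plan is to establish a point-process convergence from which the joint limit follows by the continuous mapping theorem. On the locally compact space $(-\infty,\infty]\times[0,\infty]$, define
\[
  \eta_n
  = \sum_{i=1}^n \delta_{(Y_{n,i},\,Z_{n,i})},
  \qquad
  Y_{n,i}=\frac{i^{\gamma}X_i}{n^{\gamma}}-\b_n,
  \quad
  Z_{n,i}=\frac{n-i}{n}\log n.
\]
The key claim is that $\eta_n\convdistr\eta$, where $\eta$ is a Poisson point process with intensity $\mu(\D x,\D z)=\e^{-x}\D x\times\gamma\e^{-\gamma z}\D z$. Granting this, apply to $\eta_n$ the map $m\mapsto(x^*,z^*)$ recording the first and second coordinates of the right-most point of $m$: on any compact set $[x,\infty]\times[0,\infty]$ the mean measure $\mu$ is finite, so $\eta$ has only finitely many points there a.s.\ and no ties in the first coordinate; hence the map is a.s.\ continuous at $\eta$. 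Its value at $\eta_n$ is exactly $(M_n/n^{\gamma}-\b_n,\,(n-\tau_n)\log n/n)$, and a calculation analogous to~\eqref{eq:XZ} shows that $X=\max_ix_i$ is Gumbel, independent of the corresponding $Z\sim E_\gamma$.

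For the point-process convergence, I would invoke Kallenberg's theorem (e.g.\ \cite[Prop.~3.22]{resnick}): since $\mu$ is diffuse, it suffices to verify, for rectangles $A=[x,\infty]\times[0,z]$, that
(i) $\Exp[\eta_n(A)]\to\mu(A)=\e^{-x}(1-\e^{-\gamma z})$ and
(ii) $\Prob(\eta_n(A)=0)\to\exp(-\mu(A))$.
The indices with $Z_{n,i}\le z$ are $i=n-k$ for $0\le k\le nz/\log n$, and for those
\[
  \Prob(Y_{n,i}>x)
  =\exp\!\left(-(x+\b_n)(1-k/n)^{-\gamma}\right).
\]
Using $\b_n=\log(n/\gamma)-\log\log n$ together with the uniform expansion $(1-k/n)^{-\gamma}=1+\gamma k/n+\Oh((k/n)^2)$, the right-hand side rewrites as $\tfrac{\gamma\log n}{n}\e^{-x}\e^{-\gamma k\log n/n}(1+\oh(1))$, the quadratic error contributing an exponent of order $k^2\log n/n^2=\Oh(1/\log n)$ uniformly in $k\le nz/\log n$. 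Summing over $k$ realizes a Riemann sum with step $\log n/n$ converging to $\e^{-x}\int_0^z\gamma\e^{-\gamma u}\D u=\mu(A)$. For~(ii), independence of the $X_i$ yields $\Prob(\eta_n(A)=0)=\prod_i(1-\Prob(Y_{n,i}>x))$; since $\max_i\Prob(Y_{n,i}>x)=\e^{-x-\b_n}\to 0$ while the sum converges to $\mu(A)$, the bound $-\log(1-p)=p+\Oh(p^2)$ gives $\log\Prob(\eta_n(A)=0)\to-\mu(A)$.

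The main obstacle is the uniform control in~(i): the Taylor error in $(1-k/n)^{-\gamma}$ must be shown to be negligible even when multiplied by the large factor $x+\b_n\sim\log n$, and when treating disjoint unions of rectangles of the form $[x,\infty]\times(z_1,z_2]$ one must also verify that indices with $(n-i)\log n/n\gg z_2$ contribute vanishingly to the mean. Once this bookkeeping is settled, both (i) and (ii) fall out from the Riemann-sum computation above, and the proposition follows by the continuous mapping argument of the first paragraph.
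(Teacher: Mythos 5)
Your proposal reaches the result by the same high-level route as the paper — establish convergence of the point process $\sum_i\delta_{(Y_{n,i},Z_{n,i})}$ to a Poisson process with product intensity, then apply a continuous-mapping/void-probability argument — but the ingredients differ in an interesting way. The paper invokes Grigelionis's theorem for null arrays (\cite[Thm.~16.18]{kallenberg}) and computes the limiting mean measure by \emph{reusing} the maximum: it writes $\sum_i\Prob(Y_{n,i}\ge x)=-(1+\oh(1))\log\Prob(M_{\lfloor n-nz/\log n\rfloor}/n^\gamma-\beta_n<x)$ and plugs in Lemma~\ref{lem:Mz1z2} (ultimately Proposition~\ref{prop:WN}, the Withers--Nadarajah result). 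You instead use the Kallenberg/Resnick avoidance-plus-mean criterion and compute the mean measure by a direct Riemann sum, expanding $(1-k/n)^{-\gamma}$ and tracking the error uniformly for $k\le nz/\log n$; this is more elementary and self-contained, at the cost of the bookkeeping you flag. Your computation is correct: the quadratic remainder does give an exponent $\Oh(1/\log n)$ uniformly, and the sum does converge to $\e^{-x}(1-\e^{-\gamma z})$. Two points you should tidy up. First, to make the continuous-mapping step clean you should work in the compactified space $(-\infty,\infty]\times[0,\infty]$, so that $[x,\infty]\times[0,\infty]$ really is compact; but then your verification must also cover rectangles of the form $[x,\infty]\times[z,\infty]$, which requires showing that indices with $k=n-i\gg nz/\log n$ (all the way down to $i=1$) contribute $\oh(1)$ to $\sum_i\Prob(Y_{n,i}>x)$. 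You acknowledge this but do not carry it out; it is exactly the step the paper gets for free from the $M_{\lfloor n-nz/\log n\rfloor}$ reformulation, and you would need a separate tail bound (for instance, $\Prob(Y_{n,i}>x)\le\exp(-(x+\beta_n)(n/i)^\gamma)$ decays super-geometrically once $n/i$ is bounded away from~$1$). Second, the continuity of the right-most-point map should also be argued to rule out a point at the $z=\infty$ boundary of the compactified strip, which again reduces to the same tail estimate. Neither point is a conceptual error; they are the missing pieces of a proof that is otherwise a valid, somewhat more hands-on alternative to the paper's.
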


We start by proving a lemma which is basic for the proof of Proposition~\ref{prop:loc} and the associated point process result given in Proposition~\ref{prop:point_discrete}.

\begin{lemma}
\label{lem:Mz1z2}
For every $z\in\mathbb R$, we have
\[
  \frac{M_{\lfloor n(1 -z / \log n) \rfloor}}{n^\gamma} - \beta_n 
  \convdistr G -\gamma z,
  \qquad \text{ as }n \to \infty,
\]
where $G$ is a Gumbel random variable.
\end{lemma}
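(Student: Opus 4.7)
The strategy is to reduce to Proposition~\ref{prop:WN} applied at the shifted index $m := \lfloor n(1-z/\log n)\rfloor$, absorbing the mismatch between using $m$ in the maximum but $n^\gamma$ and $\beta_n$ in the normalization and centering into a purely deterministic correction term. For every fixed $z\in\mathbb{R}$, we have $m\to\infty$ once $n$ is large enough, so Proposition~\ref{prop:WN} yields $M_m/m^\gamma - \beta_m \convdistr G$.

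The first step is the algebraic identity
\[
\frac{M_m}{n^\gamma} - \beta_n
= \left(\frac{m}{n}\right)^\gamma\left(\frac{M_m}{m^\gamma} - \beta_m\right) + \left[\left(\frac{m}{n}\right)^\gamma \beta_m - \beta_n\right].
\]
Since $m/n = 1 - z/\log n + \Oh(1/n)\to 1$, the prefactor $(m/n)^\gamma$ tends to $1$, so Slutsky's lemma makes the first summand converge in distribution to $G$. The task then reduces to showing that the deterministic bracketed term tends to $-\gamma z$.

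To handle that, I would split the bracket as $((m/n)^\gamma - 1)\beta_m + (\beta_m - \beta_n)$ and expand each piece. A first-order Taylor expansion of $x\mapsto x^\gamma$ at $x=1$ gives $(m/n)^\gamma - 1 = -\gamma z/\log n + \oh(1/\log n)$. Combined with $\beta_m = \log(m/\gamma) - \log\log m \sim \log n$, this makes the first piece converge to $-\gamma z$. For the second piece, from $\log m = \log n + \log(m/n) = \log n + \Oh(1/\log n)$ one obtains $\log m - \log n = \Oh(1/\log n)$ and $\log\log m - \log\log n = \Oh(1/(\log n)^2)$, so $\beta_m - \beta_n \to 0$. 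Adding and applying Slutsky concludes the proof.

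The only delicate point is this double-asymptotic balancing: the factor $(m/n)^\gamma - 1$ has order $1/\log n$ while $\beta_m$ grows like $\log n$, so one must be sure the Taylor remainder is genuinely $\oh(1/\log n)$ and not merely $\Oh(1/\log n)$, otherwise after multiplying by $\beta_m$ the residual would contaminate the limit. Everything else is routine bookkeeping.
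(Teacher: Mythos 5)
Your proof is correct and follows essentially the same route as the paper: both pass $M_{\lfloor n(1-z/\log n)\rfloor}$ through Proposition~\ref{prop:WN} at the shifted index $m=\lfloor n(1-z/\log n)\rfloor$, apply Slutsky with $(m/n)^\gamma\to 1$, and show the deterministic residual $(m/n)^\gamma\beta_m-\beta_n\to-\gamma z$. The paper leaves that last step as ``an elementary calculation''; you have simply carried it out explicitly, and your concern about the Taylor remainder is easily dispatched since $(m/n)^\gamma-1+\gamma z/\log n=\Oh(1/(\log n)^2)=\oh(1/\log n)$, so multiplying by $\beta_m=\Theta(\log n)$ leaves no contribution.
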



\begin{proof}
Letting $M_n(z)=M_{\lfloor n(1 -z / \log n) \rfloor}$ we find from Proposition~\ref{prop:max} that
\begin{align*}
  G_n =
  \frac{M_n(z)}{\lfloor n - n z / \log n \rfloor^\gamma}
  - \beta_{\lfloor n - n z / \log n \rfloor}
  \convdistr G, \qquad \text{ as }n \to \infty.
\end{align*}
Further,
\[
  \frac{M_n(z)}{n^\gamma} - \beta_n \\
  =
  \left(
  G_n + \beta_{\lfloor n - n z / \log n \rfloor}
  \right) \frac{\lfloor n - n z / \log n \rfloor^\gamma}{n^\gamma}
  - \beta_n.
\]
An elementary calculation yields
\begin{equation}\label{eq:toextend}
  \beta_{\lfloor n - n z / \log n \rfloor} \frac{\lfloor n - n z / \log n \rfloor^\gamma}{n^\gamma} - \beta_n
  \to
  - \gamma z,
  \qquad \text{ as }n \to \infty.
\end{equation}
The result follows by Slutsky's lemma and the fact that $\lfloor n - n z / \log n \rfloor^\gamma \sim n^\gamma$, where $a_n \sim b_n$ means that $a_n / b_n \to 1$ as $n \to \infty$.
\end{proof}

The following result establishing convergence of the underlying point processes is close in spirit to, e.g.,~\cite[Thm.\ 1]{weissman75}, and it serves as the basis for Theorem~\ref{thm:point_proc}.
\begin{proposition}\label{prop:point_discrete}
The point process $\hat \xi_n$ on $(-\infty,\infty]^2$ consisting of the points \[\left(\frac{i^\gamma X_i}{n^\gamma}-\beta_n,\frac{n-i}{n}\log n\right)\qquad i=1,2,\ldots\]
converges in distribution as $n\rightarrow\infty$ to the Poisson point process $\hat \xi$ with mean measure
\[\hat \mu(\D x,\D z)=\e^{-x}\D x\times\gamma\e^{-\gamma z}\D z.\]
\end{proposition}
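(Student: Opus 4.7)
The plan is to verify Kallenberg's criterion for Poisson limits of point processes (see e.g.\ \cite[Prop.\ 3.22]{resnick}): since $\hat\mu$ is a diffuse Radon measure on $(-\infty,\infty]^2$, it suffices to exhibit a $\pi$-system $\mathcal R$ of relatively compact, $\hat\mu$-continuous Borel sets generating the Borel $\sigma$-algebra, and to check that $\Exp[\hat\xi_n(B)]\to\hat\mu(B)$ and $\Prob(\hat\xi_n(B)=0)\to\e^{-\hat\mu(B)}$ for every $B\in\mathcal R$. I would take $\mathcal R$ to consist of the rectangles $B=(x,\infty]\times(z_1,z_2]$ with $x\in\mathbb R$ and $-\infty<z_1<z_2\leq\infty$; these are relatively compact in $(-\infty,\infty]^2$, $\hat\mu$-continuous, and generate the Borel $\sigma$-algebra.

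The avoidance probability reduces essentially for free to Lemma~\ref{lem:Mz1z2}. Setting $n_k=\lfloor n(1-z_k/\log n)\rfloor$, the indices contributing to $\hat\xi_n(B)$ are exactly those with $n_2<i\leq n_1$, and independence of the $X_i$'s gives
\[
\Prob(\hat\xi_n(B)=0)=\frac{\Prob(M_{n_1}/n^\gamma-\beta_n\leq x)}{\Prob(M_{n_2}/n^\gamma-\beta_n\leq x)},
\]
with the convention that the denominator equals~$1$ when $z_2=\infty$ (so $n_2=0$). Applying Lemma~\ref{lem:Mz1z2} in numerator and denominator would then produce the limit $\exp(-\e^{-x}(\e^{-\gamma z_1}-\e^{-\gamma z_2}))=\exp(-\hat\mu(B))$, as required.

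For the intensity, I would write $\Exp[\hat\xi_n(B)]$ as the sum of $\exp(-(n/i)^\gamma(x+\beta_n))$ over the same index range, substitute the first-order expansion $(n/i)^\gamma=1+\gamma z_i/\log n+\Oh(z_i^2/\log^2 n)$ with $z_i=(n-i)/n\cdot\log n$, and combine with $\e^{-\beta_n}=\gamma\log n/n$ to rewrite each summand as $(1+\oh(1))(\gamma\log n/n)\e^{-x-\gamma z_i}$; recognising the total as a Riemann sum in $z_i$ with step $\log n/n$ yields the limit $\e^{-x}\int_{z_1}^{z_2}\gamma\e^{-\gamma z}\,\dd z=\hat\mu(B)$. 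The hard part will be making the Taylor estimate uniform over the index range and, for the case $z_2=\infty$, showing that indices $i\ll n$ contribute $\oh(1)$ to $\Exp[\hat\xi_n(B)]$; the bound $(n/i)^\gamma\geq\epsilon^{-\gamma}>1$ for $i\leq\epsilon n$ does the job, since each such term is at most $(\gamma\log n/n)^{\epsilon^{-\gamma}}\e^{-\epsilon^{-\gamma}x}$ and the sum over $\epsilon n$ indices is negligible. Once both Kallenberg conditions are established on $\mathcal R$, the claimed convergence $\hat\xi_n\convdistr\hat\xi$ follows.
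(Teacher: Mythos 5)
Your route --- verifying the Kallenberg/Resnick avoidance-plus-intensity criterion --- is a genuine alternative to the paper's proof, which instead invokes Grigelionis's theorem (via Kallenberg, Thm.\ 16.18) for the null array of independent single-point processes $\delta_{(Y_{n,i},(1-i/n)\log n)}$. Because $\hat\xi_n$ is a superposition of independent single-point processes, the paper only has to verify a sup-condition and a sum-condition on the individual inclusion probabilities; both are \emph{linear} in the test set $B$, so they reduce by inclusion--exclusion to the corner sets $[x,\infty]\times[z,\infty]$, and the sum condition is then tied to Lemma~\ref{lem:Mz1z2} through the short cut $\sum_i p_{n,i}=-(1+\oh(1))\log\prod_i(1-p_{n,i})$. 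Your avoidance-probability computation is clean and is really the same use of Lemma~\ref{lem:Mz1z2}; your intensity computation by a Riemann sum is correct in outline but heavier than the paper's log trick, which you could simply borrow.

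The genuine gap is in the reduction to the class $\mathcal R$ of sets $(x,\infty]\times(z_1,z_2]$. Kallenberg's criterion as stated in Resnick does not run on an arbitrary generating $\pi$-system: the class must at least be closed under finite unions (a DC-ring, or at minimum a dissecting semi-ring), precisely because the avoidance probability is not additive in $B$ and convergence of $\Prob(\hat\xi_n(B)=0)$ for single rectangles does not by itself control the joint avoidance behaviour needed to pin down the limit law and prove weak convergence. Your $\mathcal R$ is not closed under unions, and it is not a semi-ring either: $(0,\infty]\times(0,1]\setminus(1,\infty]\times(0,1]=(0,1]\times(0,1]\notin\mathcal R$. (Also, for $z_2<\infty$ these sets are not open, whereas Resnick's statement is for relatively compact open sets.) The fix is within reach in this particular problem: the second coordinate $(1-i/n)\log n$ is deterministic and the $X_i$ are independent, so for any finite union $B$ of your rectangles one has $\Prob(\hat\xi_n(B)=0)=\prod_i\Prob(Y_{n,i}\leq x_i^*)$ where $x_i^*$ is a step function of $i$, and the product factors over finitely many index blocks, each giving a ratio of exactly the form you already handle. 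But as written, the proposal skips this step and cites the criterion for a class on which it does not apply; you should either carry out that extension or switch to Grigelionis, where linearity makes the reduction to single corner rectangles immediate.
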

\begin{proof}
Let $Y_{n,i}=i^\gamma X_i/n^\gamma -\beta_n$.
According to the result of Grigelionis, see e.g.~\cite[Thm.\ 16.18]{kallenberg}, applied to a null array of single points, it is only required to show that
\begin{align*}
\sup_{i\geq 1}\,\Prob\{(Y_{n,i},(1-i/n)\log n)\in B\}&\rightarrow 0,\\
\sum_{i\geq 1}\,\Prob\{(Y_{n,i},(1-i/n)\log n)\in B\}&\rightarrow \hat\mu(B),
\end{align*}
for any finite union~$B$ of rectangles in~$(\infty,\infty]^2$. 
In our setting it is sufficient to check the above limits for~$B=[x,\infty]\times [z,\infty]$. The first limit result follows from the monotonicity of $(i/n)^\gamma$ and 
\begin{equation*}\Prob(Y_{n,\lfloor n(1-z/\log n)\rfloor}\geq x)=\exp\{-(x+\beta_n)(1+o(1))\}\rightarrow 0.\end{equation*} 
Using this and Lemma~\ref{lem:Mz1z2} we also find that 
\begin{align*}
&\sum_{\substack{i \geq  1\\ (1 - i/n)\log n\geq z}}\Prob(Y_{n,i}\geq x)=
-(1+\oh(1))\log\prod_{\substack{i \geq  1\\ (1 - i/n)\log n\geq z}}\Prob(Y_{n,i}< x)\\
&=-(1+\oh(1))\log \Prob(M_{\lfloor n-nz/\log n\rfloor}/n^\gamma-\beta_n< x)\rightarrow -\log \Prob(G-\gamma z< x)\\
&=\e^{-x-\gamma z}=\hat\mu([x,\infty]\times [z,\infty]),
\end{align*}
as required.
\end{proof}

\begin{proof}[Proof of Proposition~\ref{prop:loc}]
It follows by the continuous mapping theorem applied to Proposition~\ref{prop:point_discrete} in the same way as Theorem~\ref{thm:main} follows from Theorem~\ref{thm:point_proc}.

Alternatively, one may proceed directly by identifying the limit distribution: 
\[\max\{i^\gamma X_i: {i\in\mathbb N}, n(1-z/\log n)< i\leq n\}/n^\gamma-\beta_n\convdistr G+\log(1-\e^{-\gamma z}),\]
and then expressing the distribution of interest using $M_{\lfloor n-nz/\log n\rfloor}/n^\gamma-\beta_n$ and the above quantity. 
\end{proof}

\section{Gaps of an inhomogeneous Poisson process}
\label{sec:gaps}

Let $0 < T_1 < T_2 < \ldots$ be the points of a Poisson process $\mathcal{N} = (\mathcal{N}_t)_{t \ge 0}$ with rate $\lambda_t = \alpha \, t^{\alpha - 1}$ for some $0 < \alpha < 1$ and with cumulative rate function $\Lambda(t) = \int_0^t \lambda_s \, \D s = t^\alpha$. Note that we assume that $\lambda_1 = \alpha$; the case $\lambda_t = \lambda_1 t^{\a - 1}$ for general $\lambda_1 > 0$ follows by the time change argument, but see also Section~\ref{sec:reg_var}. Recall that $R_i = T_i - T_{i-1}$ for integer $i \ge 1$, where $T_0 = 0$.

Define $T'_i = T_i^\a$ for integer $i \ge 0$, so that $0 < T'_1 < T'_2 < \ldots$ are the points of a unit-rate homogeneous Poisson process $({\mathcal N}'_t)_{t \ge 0}$. Let $X_i = T'_i - T'_{i-1}$ be its interarrival times, for integer $i \ge 1$. The random variables $X_1, X_2, \ldots$ are independent unit exponentials. Put 
\[ 
  \gamma = (1 - \a)/\a \in (0, \infty).
\] 

The following result provides the basic approximation.
\begin{lemma}
\label{lem:RbyX}
We have as $i\to\infty$ that
\begin{equation}
\label{eq:X2R}
  \left|\frac{T_i}{i^{1/\a}}-1\right|\vee \left|\frac{\a R_i}{i^\gamma X_i}-1\right|=\oh(1/\log i) \quad\text{a.s.}
\end{equation}
\end{lemma}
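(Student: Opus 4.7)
Both quantities in~\eqref{eq:X2R} can be traced back to the fluctuation of the partial sum $T'_i = X_1 + \cdots + X_i$ around its mean~$i$. My plan is therefore to first obtain a sufficiently sharp a.s.\ rate for $T'_i / i \to 1$, and then propagate it through a power and a mean value theorem.

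The core a.s.\ estimate I would use is
\[
\frac{T'_i}{i} = 1 + \oh\!\left(\frac{1}{\log i}\right) \quad \text{a.s.},
\]
which is comfortably implied by the law of the iterated logarithm applied to the i.i.d.\ centred sequence $X_j - 1$ (giving in fact $T'_i - i = \Oh(\sqrt{i \log \log i})$ a.s.); alternatively, Chernoff bounds on sums of unit exponentials combined with Borel--Cantelli do the job.

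For the first quantity in~\eqref{eq:X2R}, since $T_i = (T'_i)^{1/\alpha}$ I would write $T_i / i^{1/\alpha} = (T'_i/i)^{1/\alpha}$ and invoke $(1+u)^{1/\alpha} - 1 = u/\alpha + \Oh(u^2)$ with $u = T'_i/i - 1$. For the second quantity, I would apply the mean value theorem to $s \mapsto s^{1/\alpha}$ on $[T'_{i-1}, T'_i]$. Since $1/\alpha - 1 = \gamma$, this produces $\xi_i \in [T'_{i-1}, T'_i]$ with
\[
R_i = (T'_i)^{1/\alpha} - (T'_{i-1})^{1/\alpha} = \frac{1}{\alpha}\, \xi_i^{\gamma}\, X_i,
\]
so $\alpha R_i/(i^{\gamma} X_i) = (\xi_i/i)^{\gamma}$. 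Applying the core estimate to both $T'_{i-1}/i$ and $T'_i/i$ forces $\xi_i/i = 1 + \oh(1/\log i)$ a.s., and $(1+u)^{\gamma} - 1 = \Oh(u)$ as $u \to 0$ delivers the required bound.

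There is no genuine conceptual obstacle; the only point of care is to work with an a.s.\ rate strictly faster than $1/\log i$ when absorbing the multiplicative constants $1/\alpha$ and $\gamma$ arising from the Taylor/MVT step, so that the $\oh(1/\log i)$ conclusion survives the exponentiation. The $\Oh(\sqrt{(\log \log i)/i})$ rate from the LIL leaves ample slack, so this is a bookkeeping matter rather than a technical one.
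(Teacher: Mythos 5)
Your proposal is correct and follows essentially the same route as the paper: the law of the iterated logarithm gives the core estimate $T'_i/i - 1 = \oh(1/\log i)$ a.s., the first bound comes from a Taylor expansion of $(T'_i/i)^{1/\alpha}$, and the second from the mean value theorem applied to $s \mapsto s^{1/\alpha}$ on $[T'_{i-1}, T'_i]$, exactly as in the paper's proof. One small remark: there is no real issue with ``absorbing the multiplicative constants $1/\alpha$ and $\gamma$'' -- constant multiples leave the $\oh(1/\log i)$ class unchanged -- so the bookkeeping concern you flag at the end is already moot.
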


\begin{proof}
Since $(T_i')_i$ is the partial sum process of a sequence of independent unit exponentials, the law of the iterated logarithm states that
\begin{equation*}
  \limsup_{i \to \infty} \frac{T_i'/i - 1}{\sqrt{i^{-1} \log\log i}} = \sqrt{2}
  \qquad
  \text{a.s.},
\end{equation*}
which further implies 
\begin{equation}\label{eq:itlog}
|T_i'/i-1|\log i\rightarrow 0\quad \text{a.s.}
\end{equation}
But then
\[
T_i/i^{1/\a}-1=(T_i'/i)^{1/\a}-1=(T_i'/i-1)(1+\oh(1))/\a \quad \text{a.s.}
\]
and so $|T_i/i^{1/\a}-1|\log i\rightarrow 0$ a.s.\ as required.

Concerning the second part, we write using the mean-value theorem 
\begin{align}
\label{eq:XT2R}
  R_i 
  = T_i - T_{i-1}
  = (T_i')^{1/\a} - (T_{i-1}')^{1/\a}
  = (T_i')^{1/\a} - (T_i' - X_i)^{1/\a}= \a^{-1} \theta_i^{\gamma} X_i
\end{align}
with $T_{i-1}' < \theta_i < T_i'$.
Hence it is left to show that $|(\theta_i/i)^\gamma-1|\log i\rightarrow 0$ a.s., which again follows from~\eqref{eq:itlog}.
\end{proof}

In the following we relate the points of the point process $\xi_t$ in Theorem~\ref{thm:point_proc} to the corresponding points of the process $\hat\xi_{\lceil t^\a\rceil}$ in Proposition~\ref{prop:point_discrete} with rescaled second component.

\begin{lemma}\label{lem:points_close}
Let $B=[x_1,x_2]\times[z_1,z_2]$ and put
\begin{align*}
  u_i(t)
  &= (\lambda_t R_i-b_t,(1-T_i/t)\log t),\\
  v_i(t)
  &= (i^\gamma X_i/n^\gamma-\beta_n,(1-i/n)\log(n)/\a^2)
\end{align*}
with $n=n(t)=\lceil t^\a\rceil$.
Then 
\[
  \sup_i\{\|u_i(t)-v_i(t)\|_1:v_i(t)\in B\text{ or }u_i(t)\in B\}
  \rightarrow 0\quad \text{a.s.}
\]
as $t\rightarrow \infty$ with the convention that~$\sup\varnothing = 0$.
\end{lemma}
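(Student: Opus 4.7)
The plan is to use Lemma~\ref{lem:RbyX} to replace $R_i$ and $T_i$ by their exponential-sum approximations and then show each coordinate of $u_i(t)-v_i(t)$ vanishes uniformly over the indices $i$ compatible with $B$. Two preliminary reductions are needed. First, I would observe that both $v_i(t)\in B$ and $u_i(t)\in B$ force $i$ into a window $|i-n|=O(n/\log n)$ around $n=\lceil t^\alpha\rceil$: for $v_i$ this is immediate from the second coordinate, while for $u_i$ one combines $(1-T_i/t)\log t\in[z_1,z_2]$ with the a.s.\ asymptotics $T_i\sim i^{1/\alpha}$ and $t\sim n^{1/\alpha}$ supplied by Lemma~\ref{lem:RbyX}. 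In particular $i\to\infty$ with $t$, so the a.s.\ error term $o(1/\log i)$ is uniformly $o(1/\log t)$ in the window. Second, I would verify $\beta_n-b_t\to 0$ by expanding $\log n=\alpha\log t+O(t^{-\alpha})$ and $\log\log n=\log\alpha+\log\log t+o(1)$ and plugging in $\gamma=(1-\alpha)/\alpha$ and $\lambda_1=\alpha$; this disposes of the offset discrepancy in the first coordinate.

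For the first coordinate, I would invoke Lemma~\ref{lem:RbyX} to write $\alpha R_i=i^\gamma X_i(1+o(1/\log i))$ and combine it with $\lambda_t=\alpha t^{\alpha-1}$ and $n^\gamma/t^{1-\alpha}=1+O(t^{-\alpha})$ to obtain
\[
\lambda_tR_i=\frac{i^\gamma X_i}{n^\gamma}\bigl(1+o(1/\log t)\bigr)\qquad\text{a.s., uniformly in the window}.
\]
On the relevant set either $v_i(t)\in B$ forces $i^\gamma X_i/n^\gamma=O(\log t)$ directly, or $u_i(t)\in B$ forces $\lambda_tR_i=O(\log t)$ and hence the same bound for $i^\gamma X_i/n^\gamma$ via the above display. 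Multiplying a relative error of order $1/\log t$ by a quantity of order $\log t$ leaves an absolute error of $o(1)$, which together with $\beta_n-b_t=o(1)$ matches the first coordinates.

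For the second coordinate, I would set $r=i/n=1+O(1/\log n)$ and combine $T_i=i^{1/\alpha}(1+o(1/\log i))$ with $t=n^{1/\alpha}(1+O(n^{-1}))$ to write $T_i/t=r^{1/\alpha}(1+o(1/\log t))$, so that $(1-T_i/t)\log t=(1-r^{1/\alpha})\log t+o(1)$. A first-order Taylor expansion of $x\mapsto x^{1/\alpha}$ at $1$, together with $(1-r)^2\log t=O(1/\log t)$ and $\log t=\alpha^{-1}\log n+O(n^{-1})$, then gives $(1-T_i/t)\log t=(1-i/n)\alpha^{-2}\log n+o(1)$ uniformly in the window, as required. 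The step I expect to be most delicate is the first coordinate: the $o(1/\log i)$ rate in Lemma~\ref{lem:RbyX}---itself coming from the law of the iterated logarithm applied to the unit-rate Poisson process---is exactly sharp enough to absorb the $O(\log t)$ size of $i^\gamma X_i/n^\gamma$ and still leave $o(1)$, and any coarser rate would leave a non-vanishing residue.
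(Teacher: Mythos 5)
Your proposal is correct and follows essentially the same route as the paper's proof: both invoke Lemma~\ref{lem:RbyX} to replace $(R_i, T_i)$ by $(i^\gamma X_i/\alpha, i^{1/\alpha})$ with a.s.\ $\oh(1/\log i)$ relative error, localize the relevant indices to a window where $i/n\to 1$ (handling $v_i\in B$ and $u_i\in B$ separately, with the $u_i$ case requiring the detour through $T_i/t\to 1$ and $T_i\sim i^{1/\alpha}$), and then observe that the $\oh(1/\log t)$ relative error is absorbed by the $O(\log t)$ magnitude of the unscaled quantities, which is guaranteed by membership in $B$. The bookkeeping for $\beta_n - b_t\to 0$ and the Taylor expansion in the second coordinate match the paper's computations.
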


\begin{proof}
Letting $I_v(t)=\{i\geq 1:v_i(t)\in B\}$ we see that $i/n\to 1$ and hence also $i/t^\a\to 1$ uniformly in $i\in I_v(t)$ as $t\to\infty$.
Now according to Lemma~\ref{lem:RbyX}, for all $i\in I_v(t)$, we have
\begin{equation}\label{eq:approx_main}\a R_i=i^\gamma X_i(1+\eta'_i),\qquad T_i=i^{1/\a}(1+\eta''_i)\end{equation}
where $|\eta'_i|\vee|\eta''_i|=\oh(1/\log t)$ as $t\rightarrow \infty$ a.s.
So we have a.s.
\begin{align}
\label{lim1}\lambda_t R_i-b_t&\leq i^\gamma X_i/t^{\a\gamma}(1+\oh(1/\log t))-b_t\\
&=(i^\gamma X_i/n^\gamma-\beta_n)(1+\oh(1/\log t))+\oh(1),\nonumber
\end{align}
where in the last line we used the facts: $\lceil t^\a\rceil^\gamma/t^{\a\gamma}=1+\oh(1/\log t)$ and $b_t=\beta_{t^\a}+\oh(1)=\beta_n+\oh(1)$. This and the analogous lower bound imply that
\[\sup_{i\in I_v(t)}|(\lambda_t R_i-b_t)-(i^\gamma X_i/n^\gamma-\beta_n)|\to 0\]
as $t\to\infty$ a.s., because $|i^\gamma X_i/n^\gamma-\beta_n|$ is bounded for the indices of interest.
Upon recalling that $i/t^\a-1\to 0$ uniformly in $i\in I_v(t)$, for all such $i$ we find that
\begin{align}
\label{lim2}
\a^2(1-T_i/t)\log t&\leq\a^2(1-i^{1/\a}(1+\oh(1/\log t))/t)\log t\\
&=\a(1-(i/t^\a)^{1/\a})\log (t^\a)+\oh(1) \nonumber\\
&=(1-i/t^\a)\log (t^\a)(1+\oh(1))+\oh(1)\nonumber\\
&=(1-i/n)\log(n)(1+\oh(1))+\oh(1).\nonumber
\end{align}
This and the analogous lower bound yield 
\[\sup_{i\in I_v(t)}|\a^2(1-T_i/t)\log t-(1-i/n)\log n|\to 0\]
as $t\to\infty$ a.s., because now $|1 - i/n| \log n$ is bounded for the indices of interest.

Next, consider the set of indices $I_u(t)=\{i\geq 1:u_i(t)\in B\}$. In this case we use the fact that $T_i/t\to 1$ uniformly in $i\in I_u(t)$. 
Furthermore, with probability~1 as $t\rightarrow \infty$ the corresponding indices~$i$ converge to $\infty$ too, and since $T_i\sim i^{1/\a}$ we must have that $i/t^\a\to 1$ uniformly in $i\in I_u(t)$. Thus~\eqref{lim1} holds true and hence also 
\begin{equation}\label{lim11}i^\gamma X_i/n^\gamma-\beta_n\geq (\lambda_t R_i-b_t)(1+\oh(1/\log t))+\oh(1).\end{equation}
The corresponding upper bound, as well as the bounds on $(1-i/n)\log(n)/\a^2$ stemming from~\eqref{lim2}, complete the proof, because $|\lambda_t R_i-b_t|$ and $|(1-T_i/t)\log t|$ are bounded for all $i\in I_u(t)$.
\end{proof}

\begin{remark}
\label{rem:rescaling}\rm 
The point process $\sum_i\delta_{v_i(n)}$ with $v_i(n)$ defined in Lemma~\ref{lem:points_close} 
is a rescaled version of $\hat\xi_n$ in Proposition~\ref{prop:point_discrete}, and the proof of the latter easily yields that $\sum_i\delta_{v_i(n)}$ converges in distribution to a Poisson point process with intensity measure for the set~$[x,\infty]\times[z,\infty]$ given by 
\[
  \hat\mu([x,\infty]\times[\a^2 z,\infty])
  =\e^{-x-\a(1-\a)z}
  =\mu([x,\infty]\times[z,\infty]).
\]
That is, the corresponding limit is~$\xi$.
\end{remark}

The following lemma shows that compact sets of the form $[x,\infty]\times[z,\infty]$ can be truncated to finite rectangles.

\begin{lemma}\label{lem:bounding}
For any $\epsilon>0$ and $z,x<\infty$ there exist $z'>z$ and $x'>x$ such that 
\[
  \limsup_{t\rightarrow \infty}\,
  \Prob(\xi_t(([x,\infty]\times[z,\infty]) \setminus ([x,x']\times[z,z']))>0)<\epsilon. 
\]
\end{lemma}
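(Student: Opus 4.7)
The plan is to bound $\Prob(\xi_t(A\setminus B)>0)$ above via Markov's inequality, compute the resulting first moment exactly using the Campbell--Mecke formula for consecutive points of $\mathcal N$, and estimate the integral asymptotically. Write $A=[x,\infty]\times[z,\infty]$ and $B=[x,x']\times[z,z']$, so $A\setminus B\subset A_1\cup A_2$ with $A_1=(x',\infty]\times[z,\infty]$ and $A_2=[x,\infty]\times(z',\infty]$. By a union bound and Markov, it suffices to show
\[
  \limsup_{t\to\infty}\Exp[\xi_t(A_1)]\le e^{-x'-\alpha(1-\alpha)z}
  \quad\text{and}\quad
  \limsup_{t\to\infty}\Exp[\xi_t(A_2)]\le e^{-x-\alpha(1-\alpha)z'},
\]
after which $x',z'$ can be chosen large enough that the sum of the bounds is less than $\epsilon$.

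For the first moment $\Exp[\xi_t(A_1)]=\Exp\bigl[\#\{i\ge 1:R_i>r,\,T_i\le s\}\bigr]$ with $r=(x'+b_t)/\lambda_t$ and $s=t(1-z/\log t)$, I would apply the Campbell--Mecke formula to pairs $(T_{i-1},T_i)$ of consecutive epochs; after combining the contribution $i=1$ with the rest, the standard calculation gives
\[
  \Exp[\xi_t(A_1)]=\int_r^s\lambda_y\,e^{-[\Lambda(y)-\Lambda(y-r)]}\,dy.
\]
Since $\Lambda(y)=y^\alpha$ is concave, $\Lambda(y)-\Lambda(y-r)\ge r\lambda_y$, so the integrand is dominated by $\lambda_y e^{-r\lambda_y}$. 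The substitution $v=r\lambda_y=(x'+b_t)(y/t)^{\alpha-1}$ rewrites this upper bound as $\frac{\alpha\,t^\alpha}{1-\alpha}(x'+b_t)^{\alpha/(1-\alpha)}\int_{v_1}^{v_2}v^{-1/(1-\alpha)}e^{-v}\,dv$.

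The main obstacle is the asymptotic of this integral. Its lower limit satisfies $v_1=(x'+b_t)(1-z/\log t)^{-(1-\alpha)}=(x'+b_t)+\alpha(1-\alpha)z+\oh(1)$, which grows like $\alpha\log t$, so the standard incomplete-Gamma tail $\int_{v_1}^\infty v^{-1/(1-\alpha)}e^{-v}\,dv\sim v_1^{-1/(1-\alpha)}e^{-v_1}$ applies. Plugging in $b_t=\alpha\log t-\log\log t-\log(1-\alpha)$, hence $e^{-b_t}=(1-\alpha)\log t\cdot t^{-\alpha}$, the apparently large prefactor collapses: the powers of $t^\alpha$ cancel, the factor $(x'+b_t)^{\alpha/(1-\alpha)}$ combines with $v_1^{-1/(1-\alpha)}$ to give $(x'+b_t)^{-1}$, and what survives is $\alpha\log t/(x'+b_t)\cdot e^{-x'-\alpha(1-\alpha)z}\to e^{-x'-\alpha(1-\alpha)z}$, as desired. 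The bound on $\Exp[\xi_t(A_2)]$ follows by the identical argument with the roles of $(x',z)$ and $(x,z')$ interchanged, and choosing $x',z'$ large enough that both limits are below $\epsilon/2$ completes the proof.
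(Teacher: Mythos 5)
Your proposal is correct, and it takes a genuinely different route from the paper's. The paper proves Lemma~\ref{lem:bounding} by first showing that $\max_{i\le n/2}\a R_i/v(n)-\beta_n\to-\infty$ (so small indices cannot contribute), then invoking the a.s.\ approximation $\a R_i\approx i^\gamma X_i$ from Lemma~\ref{lem:RbyX} for $i>n/2$, and finally transferring the tightness question to the discrete weighted-exponential point process of Proposition~\ref{prop:point_discrete}. You instead compute $\Exp[\xi_t(A_j)]$ exactly via the two-point Mecke (Palm) formula — the identity
$\Exp[\#\{i:R_i>r,T_i\le s\}]=\int_r^s\lambda_y\,\e^{-[\Lambda(y)-\Lambda(y-r)]}\,\D y$
is correct, including the $i=1$ boundary term — then bound the exponent by concavity of $\Lambda$ and extract the asymptotics from the incomplete Gamma tail. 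All the intermediate asymptotics check out: the substitution, $v_1=(x'+b_t)+\a(1-\a)z+\oh(1)$, the cancellation of the powers of $t^\a$ against $\e^{-b_t}=(1-\a)\log t\cdot t^{-\a}$, and the final prefactor $\a\log t/(x'+b_t)\to 1$. Indeed the resulting limit $\e^{-x'-\a(1-\a)z}$ is exactly $\mu(A_1)$, as it should be, so the $\limsup$ can be made small by choosing $x',z'$ large.

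A brief comparison: your approach is self-contained, elementary, and gives the sharp first moment rather than just an estimate, whereas the paper's method deliberately feeds everything through the weighted-exponential machinery that was built up in Section~\ref{sec:exponential}. The trade-off is that the paper's route extends essentially verbatim to the regularly varying case in Theorem~\ref{thm:RV} (replace $\lambda_t\propto t^{\a-1}$ by a general $\RV_{\a-1}$ rate; the same comparison to $v(i)X_i$ applies), while your explicit integral asymptotics rely on the closed form $\Lambda(y)=y^\a$ — concavity of $\Lambda$ and the clean substitution $v=r\lambda_y$ — and would need nontrivial Karamata-type estimates for the slowly varying factor. Within the scope of the power-rate case the two proofs are both valid; yours is arguably more direct.

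One small point worth making explicit in a write-up: the Mecke computation for pairs of \emph{consecutive} points is not a vanilla Campbell formula — you need the two-point Mecke identity together with the Poisson void probability $\Prob(\mathcal N(u,w)=0)=\e^{-(\Lambda(w)-\Lambda(u))}$ for the intermediate interval, plus the separate $i=1$ term. You gesture at this ("after combining the contribution $i=1$ with the rest"), and the resulting formula is right, but a referee would want those two steps spelled out.
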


\begin{proof}
Put $n=\lceil t^\a\rceil$ and observe using~\eqref{eq:XT2R} that
\begin{align*}
  \max_{i\leq n/2} \a R_i\leq \max_{i \leq n/2} (T_i')^\gamma X_i \leq (T'_{\lceil n/2\rceil})^\gamma\max_{i \leq n/2}X_i=(n/2)^\gamma\log n(1+\oh_p(1)),
\end{align*}
where in the last equality we used the law of large numbers applied to $T_i'$ and the fact that $\max_{i=1,\ldots,k} X_i - \log(k)$ is asymptotically Gumbel. But then 
 \[
  \lambda_t \max_{i\leq n/2}R_i-b_t
  \leq 2^{-\gamma}\a\log(t)(1+\oh_p(1))-b_t\rightarrow -\infty 
  \]
in probability.
Thus it is sufficient to restrict our attention to the indices $i>n/2$, in which case we have~\eqref{eq:approx_main} for all such $i$ a.s.
 
Observe that for $i>n/2$ the bound~\eqref{lim11} is still true.
Letting $I(t,z)$ be the set of indices $i$ such that $(1-T_i/t)\log t\in[z,z+1]$ or $(1-i/n)\log(n)/\a^2\in[z,z+1]$, we see from the proof of Lemma~\ref{lem:points_close} that $i/t^\a-1\to 0$ uniformly in $i\in I(t,z)$, and also that
\[
  \sup_{i\in I(t,z)}
  \lvert (1-T_i/t)\log t-(1-i/n)\log(n)/\a^2\rvert
  \to 0
  \qquad\text{as }t\to\infty\qquad\text{ a.s.}
\]
Hence for any fixed $\delta>0$ with arbitrarily high probability the following is true for large enough $t$: if for some $i>n/2$ it is true that \[\lambda_t R_i-b_t\geq x\text{ and } (1-T_i/t)\log t\geq z\] then
\[
  i^\gamma X_i/n^\gamma-\beta_n\geq x-\delta
  \text{ and }
  (1-i/n)\log(n)/\a^2\geq z-\delta,
\]
because for $i\notin I(t,z)$ the monotonicity of $T_i$ implies $(1-i/n)\log(n)/\a^2>z+1$.
Thus it is left to apply Proposition~\ref{prop:point_discrete} and to note that $\hat\mu(B_1),\hat\mu(B_2)\rightarrow 0$ with 
\[B_1=[x-\delta,\infty]\times[\a^2(z'-\delta),\infty],\quad B_2=[x'-\delta,\infty]\times[\a^2(z-\delta),\infty]\] as $x',z'\rightarrow \infty$, which implies that $\Prob(\hat\xi(B_i)>0)\rightarrow 0$. 
\end{proof}

\begin{proof}[Proof of Theorem~\ref{thm:point_proc}]
According to~\cite[Thm.\ 1]{kallenberg_SPA} it is sufficient to show that
\begin{align}
\label{thm:point_proc:a}
\lim_{t\rightarrow\infty}\Prob(\xi_t(B)=0)&= \Prob(\xi(B)=0), \\ \label{thm:point_proc:b}
\limsup_{t\rightarrow\infty}\Prob(\xi_t(K)>1) &\leq \Prob(\xi(K)>1),
\end{align}
where $K$ is a compact rectangle in~$(-\infty,\infty]^2$ and $B$ is a finite union of such rectangles.
According to Lemma~\ref{lem:bounding} we may choose $x_1<x_2,z_1<z_2$ such that
\begin{align*}
  0
  \leq \Prob(\xi_t(B')=0)-\Prob(\xi_t(B)=0)
  \leq \Prob \bigl( \xi_t(B \setminus ([x_1,x_2]\times[z_1,z_2]))>0 \bigr)
  < \epsilon
\end{align*}
for $B'=B\cap ([x_1,x_2]\times [z_1,z_2])$ and all $t$ large enough.  Furthermore, we may additionally ensure that $0\leq \Prob(\xi(B')=0)-\Prob(\xi(B)=0)\leq \epsilon$.
A similar observation holds true with respect to $\Prob(\xi_t(B)>1)-\Prob(\xi_t(B')>1)$ and the corresponding difference for the process $\xi$. Hence it is sufficient to prove~\eqref{thm:point_proc:a} and~\eqref{thm:point_proc:b} for any finite rectangle $K$ and a finite union $B$ of such rectangles.

Fix $\delta>0$ and define the $\delta$-enlarged set $B_{\delta+}=\{v:d(v,B)<\delta\}$ and $\delta$-narrowed set $B_{\delta-}=\{v:d(v,{B}^c)>\delta\}$, where $d$ is the Euclidean distance. According to Lemma~\ref{lem:points_close} with the respective rectangle chosen to cover $B$, we have
\begin{align*}
  \Prob(\#\{i: v_i(t)\notin B_{\delta+}\}=0)-\epsilon
  &\leq \Prob(\xi_t(B)=0) \\
  &\leq \Prob(\#\{i: v_i(t)\notin B_{\delta-}\}=0)+\epsilon
\end{align*} 
for all $t$ large. Noting that $\mu(\partial B)=0$ we obtain \eqref{thm:point_proc:a} from Remark~\ref{rem:rescaling} based on Proposition~\ref{prop:point_discrete}.
In a similar way we also find that $\Prob(\xi_t(K)>1)\to\Prob(\xi(K)>1)$. The proof is complete.
\end{proof}

\section{Extensions to regular variation}
\label{sec:reg_var}

Let $\RV_\rho$ denote the set of measurable functions $f : \R_+ \to \R_+$ which are regularly varying at $\infty$ with index~$\rho\in\R$, i.e., satisfying $f(ut)/f(t)\to u^\rho$ as $t\to \infty$ for all $u>0$. Any such $f$ can be represented as $f(t)=t^\rho\ell(t)$ with $\ell\in \RV_0$ a slowly varying function. Regularly varying functions are thus a generalization of the power functions considered above. Let us also recall the basic theorem concerning regularly varying functions $f\in \RV_\rho$, the Uniform Convergence Theorem~\cite[Thm.\ 1.5.2]{bingham1989regular}:
\begin{equation}\label{UCT}\tag{UCT}
f(ut)/f(t)\to u^\rho,\qquad\text{uniformly in }u
\end{equation}
on intervals $[a,b]$ with $0<a\leq b<\infty$ for $\rho\leq 0$,
and on intervals $(0,b]$ for $\rho>0$ if $f$ is locally bounded.

Assume that the rate function $t \mapsto \lambda_t$ is in $\RV_{\alpha-1}$ for some $\alpha \in (0, 1)$, so that $\Lambda \in \RV_\alpha$. Let $V(t) = \Lambda^{-1}(t)$ be the inverse function of $\Lambda$ and let $v(t) = \D V(t) / \D t$ be its derivative. Then $V \in \RV_{1/\alpha}$ and $v = 1 / (\lambda \circ V) \in \RV_\gamma$ with $\gamma = (1-\alpha)/\alpha$.
The point process $\mathcal{N}'_t = \mathcal{N}_{V(t)}$ is a unit-rate homogenous Poisson process with epochs $T_i' = \Lambda(T_i)$. 

We generalize our main results imposing just one condition on the slowly varying function $\ell$ associated with $v$, i.e., $v(t) = t^\gamma \ell(t)$; see Condition~\ref{cond:loc} below.
The basis of our analysis will be the approximation $R_i = V(T_i') - V(T_{i-1}') \approx v(i) X_i$ inspired by the mean-value theorem applied to $V$ and the strong law of large numbers applied to the partial sum sequence $T_i'$. Therefore, we first study the behaviour of the maximum of the weighted exponentials $v(i) X_i$.

\subsection{Weighted exponentials}

Let $X_1, X_2, \ldots$ be a sequence of i.i.d.\ unit exponentials and let $v \in \RV_\gamma$ with $\gamma > 0$. Write $v(t) = t^\gamma \ell(t)$ with $\ell \in \RV_0$. We consider the maximum of the weighted exponentials $v(i) X_i$ and the location of that maximum:
\begin{equation*}
  M_n^* = \max_{i=1,\ldots,n} \{ v(i) X_i \}
  \qquad\text{and}\qquad
  \tau_n^* = \min \{ i = 1, \ldots, n : v(i) X_i = M_n^* \}.
\end{equation*}

\begin{lemma}
\label{lem:tauto1}
We have \[\frac{M^*_n}{v(n)} \, \Big/ \, \frac{M_n}{n^\gamma}=1 + \oh_p(1),\qquad \frac{\tau_n^*}{n} = 1 + \oh_p(1)\] as $n \to \infty$.
\end{lemma}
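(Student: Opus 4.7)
The plan is to express $v(i)/v(n) = (i/n)^\gamma \cdot \ell(i)/\ell(n)$ and compare $M^*_n/v(n)$ to $M_n/n^\gamma$ by combining two features of $\ell \in \RV_0$: the Uniform Convergence Theorem, which gives $\ell(i)/\ell(n) \to 1$ uniformly on ranges of $i$ bounded away from $0$ and $\infty$ in units of $n$; and Potter's inequality, which controls $\ell(i)/\ell(n)$ for $i$ much smaller than $n$. Throughout I will rely on Proposition~\ref{prop:loc}, which gives $\tau_n/n \to 1$ in probability and $M_n/n^\gamma$ of order $\log n$ (since $M_n/n^\gamma - \beta_n$ is tight and $\beta_n \sim \log n$).

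For the first claim, the lower bound on the ratio is immediate: taking $i = \tau_n$ gives $M^*_n \ge v(\tau_n) X_{\tau_n}$, hence
\[
\frac{M^*_n/v(n)}{M_n/n^\gamma} \ge \frac{v(\tau_n) X_{\tau_n}/v(n)}{\tau_n^\gamma X_{\tau_n}/n^\gamma} = \frac{\ell(\tau_n)}{\ell(n)},
\]
which tends to $1$ in probability by UCT applied to $\ell$ together with $\tau_n/n \to 1$. For the matching upper bound I would fix $\delta \in (0,1)$ and split the maximum at $\delta n$. On $\delta n < i \le n$, UCT gives $\ell(i)/\ell(n) = 1 + \oh(1)$ uniformly, so this part is at most $(1+\oh(1))\, M_n/n^\gamma$. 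On $i \le \delta n$, Potter's bound gives $\ell(i)/\ell(n) \le C(n/i)^\epsilon$ for any fixed $\epsilon \in (0,\gamma)$ and all sufficiently large $i$; the bounded initial segment of small indices contributes a term that vanishes because $v(n) \to \infty$. Thus this part is at most $C \delta^{\gamma-\epsilon} \max_{i \le n} X_i = C \delta^{\gamma-\epsilon}(\log n + \Oh_p(1))$, and dividing by $M_n/n^\gamma \sim \log n$ gives a quantity bounded by roughly $C \delta^{\gamma-\epsilon}$ in probability. Since $\delta$ can be chosen arbitrarily small, the ratio is $1 + \oh_p(1)$.

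For the second claim, I would refine the split with an extra cut at $(1-\delta)n$. By UCT the middle range $\delta n < i \le (1-\delta)n$ contributes at most $(1+\oh(1))(1-\delta)^\gamma \max_{i \le n} X_i$, whose ratio to $M_n/n^\gamma$ tends to $(1-\delta)^\gamma < 1$ in probability. Combined with the small-$i$ bound above, the maximum $M^*_n$ must therefore be attained on $(1-\delta)n < i \le n$ with high probability, so $\tau^*_n/n > 1-\delta$ with high probability. Since $\delta > 0$ is arbitrary, $\tau^*_n/n \to 1$ in probability.

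The main obstacle is controlling the small-$i$ range, where $\ell$ can a priori behave wildly; this is precisely where Potter's inequality enters, turning the slowly varying prefactor into an arbitrarily small polynomial that is then dominated by the factor $(i/n)^\gamma$ with $\gamma > 0$. Once this domination is in place, the rest of the argument is a clean localisation near $i = n$ using UCT together with Proposition~\ref{prop:loc}.
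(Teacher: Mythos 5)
Your proof is correct, but it takes a genuinely different route from the paper's. The paper first establishes $\tau_n^*/n \to 1$ in probability by directly comparing $M_{\lfloor nh\rfloor}^*$ with $M_n^*$, and for both bounds applies the Uniform Convergence Theorem to the function $v \in \RV_\gamma$ itself (on $(0,b]$, after arranging local boundedness by replacing $v(t)$ with $v(\lfloor t\rfloor)$), giving $\max_{i\leq \lfloor nh\rfloor} v(i)/v(n) \to h^\gamma$ and $\min_{\lfloor ng\rfloor\leq i\leq n} v(i)/v(n) \to g^\gamma$; the ratio statement is then read off because, with high probability, both $M_n^*$ and $M_n$ are attained on $\{\lfloor ng\rfloor,\dots,n\}$, where $\ell(i)/\ell(n)\to 1$ uniformly by \eqref{UCT} for $\ell \in \RV_0$. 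You reverse the order (ratio first, then location) and replace the \eqref{UCT}-at-the-origin argument for $v$ by a factorisation $v(i)/v(n) = (i/n)^\gamma\,\ell(i)/\ell(n)$ combined with Potter's inequality, which turns the possibly wild slowly varying prefactor into an $(n/i)^\epsilon$ factor that is then dominated by $(i/n)^\gamma$ since $\gamma>0$; the initial block $i<A$ is killed separately by $v(n)\to\infty$. Both mechanisms are standard and exploit $\gamma>0$ in essentially the same place; the paper's is slightly more compact because UCT on $(0,b]$ absorbs the Potter-plus-initial-segment work into one appeal, while yours avoids the local-boundedness remark for $v$ at the cost of an extra range split. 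One point worth stating explicitly in your write-up is that the ``small-$i$'' max must be compared against a lower bound on $M_n^*/v(n)$ of order $\log n$ (which your first-claim lower bound already supplies) before you can conclude that $\tau_n^*$ lands in $((1-\delta)n, n]$; you gesture at this but should spell it out so the second claim does not appear to rest on the upper bound alone.
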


\begin{proof}
Let $0 < h < 1$. First, we prove that $\Prob( \tau_n^* / n > h ) \to 1$ as $n \to \infty$, or equivalently, that 
\begin{equation} 
\label{eq:tauto1}
  \lim_{n \to \infty} \Prob( M_{\floor{nh}}^* < M_n^* ) = 1.
\end{equation}

On the one hand, we have
\[
  \frac{M_{\floor{nh}}^*}{v(n)}
  =
  \max_{i=1,\ldots,{\floor{nh}}} \frac{v(i)}{v(n)} X_i
  \le
  \max_{i=1,\ldots,{\floor{nh}}} \frac{v(i)}{v(n)}
  \max_{i=1,\ldots,{\floor{nh}}} X_i.
\]
But $v\in\RV_\gamma$ can be assumed to be locally bounded [otherwise redefine $v$ by $v(t) = v(\floor{t})$], and so by \eqref{UCT} it follows that
\[
  \lim_{n \to \infty} \max_{i=1,\ldots,{\floor{nh}}} \frac{v(i)}{v(n)}
  =
  \max_{u \in [0, h]} u^\gamma
  =
  h^\gamma.
\]
Since $X_1, X_2, \ldots$ are i.i.d.\ unit exponentials, we have $\max_{i = 1, \ldots, \floor{nh}} X_i = \log \floor{nh} + \Oh_p(1) = \log(n) \{ 1 + \oh_p(1) \}$ and thus
\[
  \frac{M_{\floor{nh}}^*}{v(n)}
  \le
  h^\gamma \log(n) \{1 + \oh_p(1)\}
  \qquad\text{ as }n \to \infty.
\]

On the other hand, let $g \in (h, 1)$. By a similar argument as in the previous paragraph, we find
\begin{align*}
  \frac{M_n^*}{v(n)}
  \ge
  \max_{i=\floor{ng},\ldots,n} \frac{v(i)}{v(n)} X_i
  &\ge
  \min_{i=\floor{ng},\ldots,n} \frac{v(i)}{v(n)}
  \max_{i=\floor{ng},\ldots,n} X_i \\
  &=
  g^\gamma \log(n) \{1 + \oh_p(1)\}
  \qquad\text{ as }n \to \infty.
\end{align*}
Since $h^\gamma < g^\gamma$, we obtain \eqref{eq:tauto1}, as required.

Concerning the first statement, observe from above and Proposition~\ref{prop:loc} that with arbitrarily high probability 
\[M^*_n=\max_{i=\floor{ng},\ldots,n} \ell(i)i^\gamma X_i,\qquad M_n=\max_{i=\floor{ng},\ldots,n} i^\gamma X_i\]
for large enough~$n$. Hence it is sufficient to show that 
\begin{equation}\label{eq:ell_lim}\max_{i=\floor{ng},\ldots,n} \ell(i)/\ell(n)-1\rightarrow 0\qquad\text{ as }n\rightarrow\infty\end{equation}
and the same for $\min$, which again follows from~\eqref{UCT}. 
\end{proof}

In order to generalize Proposition~\ref{prop:max} we need a stronger statement than the readily available~\eqref{eq:ell_lim}, and so we assume the following additional condition on the slowly varying function~$\ell$.
\begin{condition}
\label{cond:loc}
Whenever $0 < \epsilon(t) \to 0$ as $t \to \infty$, we have
\[
  \log(t)  \left( \frac{\ell([1+\epsilon(t)]t)}{\ell(t)} - 1 \right)
  \to 0.
\]
\end{condition}
In Section~\ref{sec:condition} we provide a simple sufficient criterion under which Condition~\ref{cond:loc} holds.
It is important to realize that Condition~\ref{cond:loc} is equivalent to a seemingly stronger condition stated in the following lemma.
\begin{lemma}\label{lem:cond_equiv}
Condition~\ref{cond:loc} is equivalent to
\begin{equation}\label{eq:cond_equiv}
  \log(t)  \sup_{-\epsilon(t)\leq x\leq \epsilon(t)}\left| \frac{\ell((1+x)t)}{\ell(t)} - 1 \right|
  \to 0
\end{equation}
for any $0<\epsilon(t)\rightarrow 0$.
\end{lemma}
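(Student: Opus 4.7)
The direction \eqref{eq:cond_equiv} $\Rightarrow$ Condition~\ref{cond:loc} is immediate: the supremum in \eqref{eq:cond_equiv} dominates the value at $x=\epsilon(t)$, which is Condition~\ref{cond:loc} verbatim. All the work lies in the reverse implication, which I would prove by contradiction.

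Assume Condition~\ref{cond:loc} holds but \eqref{eq:cond_equiv} fails for some $\epsilon(t)\downarrow 0$. Then there exist $\delta>0$, a sequence $t_n\to\infty$, and $x_n\in[-\epsilon(t_n),\epsilon(t_n)]$ with
\[
  \log(t_n)\,\bigl\lvert \ell((1+x_n)t_n)/\ell(t_n)-1 \bigr\rvert \ge \delta.
\]
Passing to a subsequence, either $x_n>0$ for every $n$ or $x_n<0$ for every $n$ (the case $x_n=0$ is ruled out by $\delta>0$). In the positive case, define a function $\tilde\epsilon:\R_+\to(0,\infty)$ by interpolating between the values $\tilde\epsilon(t_n)=x_n$ (say, setting $\tilde\epsilon(t)=\max(x_n,1/\log t)$ on $[t_n,t_{n+1})$ so that $\tilde\epsilon$ remains positive and tends to $0$). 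Condition~\ref{cond:loc} applied to this $\tilde\epsilon$ yields $\log(t_n)(\ell((1+x_n)t_n)/\ell(t_n)-1)\to 0$, contradicting the displayed inequality above.

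The negative case is the one subtle point. Write $x_n=-y_n$ with $y_n\in(0,\epsilon(t_n)]$ and introduce
\[
  s_n = (1-y_n)t_n, \qquad y'_n = \frac{y_n}{1-y_n}>0,
\]
so that $(1+y'_n)s_n = t_n$ and $s_n\to\infty$, $y'_n\to 0$. Setting $b_n=\ell((1+y'_n)s_n)/\ell(s_n) = \ell(t_n)/\ell(s_n)$, an algebraic rearrangement gives
\[
  \frac{\ell((1+x_n)t_n)}{\ell(t_n)}-1
  = \frac{\ell(s_n)}{\ell((1+y'_n)s_n)}-1
  = \frac{1-b_n}{b_n}.
\]
Extending $(y'_n)$ to a positive $\tilde\epsilon(\cdot)\downarrow 0$ on $\R_+$ with $\tilde\epsilon(s_n)=y'_n$ and invoking Condition~\ref{cond:loc} at the points $s_n$ forces $\log(s_n)(b_n-1)\to 0$, hence $b_n\to 1$. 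Since $\log(t_n)/\log(s_n)\to 1$, this in turn implies $\log(t_n)\lvert \ell((1+x_n)t_n)/\ell(t_n)-1\rvert\to 0$, a contradiction.

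The main obstacle is the asymmetry between the one-sided hypothesis (only $+\epsilon(t)$) and the symmetric conclusion; all the real content is the change of basepoint $t_n\mapsto s_n$ that converts a left-shift into a right-shift, together with the observation that $\log$-scales are preserved because $t_n/s_n\to 1$. The construction of the extended $\tilde\epsilon$ is routine interpolation and not a genuine difficulty.
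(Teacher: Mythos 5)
Your proof is correct and uses the same core idea as the paper: the change of basepoint $t\mapsto (1-|x|)t$ that turns a negative perturbation into a positive one, combined with the fact that $\log t/\log((1-|x|)t)\to 1$, so the $\log$-scale is preserved. The organizational difference is cosmetic — you argue by contradiction along a subsequence and split by the sign of the near-extremal $x_n$, while the paper first derives the negative-perturbation analogue of Condition~\ref{cond:loc} directly and then finishes with a near-maximizer in $x$ and a free $\epsilon>0$. One minor slip in the parenthetical: $\tilde\epsilon(t)=\max(x_n,1/\log t)$ on $[t_n,t_{n+1})$ need not satisfy $\tilde\epsilon(t_n)=x_n$ when $x_n<1/\log t_n$; in the positive case simply take $\tilde\epsilon(t)=x_n$ on $[t_n,t_{n+1})$, which is already positive and tends to $0$, and analogously $\tilde\epsilon(s_n)=y'_n$ in the negative case.
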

\begin{proof}
Given in Appendix~\ref{appendix}.
\end{proof}

Recall $\beta_n = \log(n/\gamma) - \log\log(n)$.

\begin{lemma}
\label{lem:Mnstarz1z2}
Assuming Condition~\ref{cond:loc} we have $M_n^*/v(n)-\beta_n\convdistr G$.
\end{lemma}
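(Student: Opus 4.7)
The plan is to reduce the statement to Proposition~\ref{prop:max} by showing that $M_n^*/v(n) - M_n/n^\gamma = \oh_p(1)$ and then invoking Slutsky's lemma. Writing
\[
  \frac{v(i)}{v(n)} = \left(\frac{i}{n}\right)^\gamma \frac{\ell(i)}{\ell(n)},
\]
the perturbation of $M_n/n^\gamma$ that produces $M_n^*/v(n)$ is purely multiplicative through the factor $\ell(i)/\ell(n)$, and Condition~\ref{cond:loc} is tailored precisely to make this factor negligible on the relevant scale.

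To carry this out, I first localize. From Proposition~\ref{prop:loc} we have $\tau_n/n \to 1$ in probability (since $(n-\tau_n)/n$ is $\Oh_p(1/\log n)$), and Lemma~\ref{lem:tauto1} provides $\tau_n^*/n \to 1$ in probability. A standard diagonal argument then furnishes a deterministic sequence $\epsilon(n) \to 0$ such that, setting $I_n = [\lfloor n(1-\epsilon(n)) \rfloor, n]$, the event $A_n := \{\tau_n \in I_n, \, \tau_n^* \in I_n\}$ satisfies $\Prob(A_n) \to 1$. On $A_n$ we have
\[
  \frac{M_n}{n^\gamma} = \max_{i \in I_n} (i/n)^\gamma X_i,
  \qquad
  \frac{M_n^*}{v(n)} = \max_{i \in I_n} (i/n)^\gamma \, \frac{\ell(i)}{\ell(n)} \, X_i.
\]
Applying Lemma~\ref{lem:cond_equiv} with this $\epsilon(n)$ yields
\[
  \eta_n := \sup_{i \in I_n} \left\lvert \frac{\ell(i)}{\ell(n)} - 1 \right\rvert = \oh(1/\log n).
\]
A termwise sandwich then shows that on $A_n$,
\[
  (1-\eta_n) \, \frac{M_n}{n^\gamma} \le \frac{M_n^*}{v(n)} \le (1+\eta_n) \, \frac{M_n}{n^\gamma},
\]
so $\lvert M_n^*/v(n) - M_n/n^\gamma \rvert \le \eta_n \cdot M_n/n^\gamma$. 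Since Proposition~\ref{prop:max} gives $M_n/n^\gamma = \beta_n + \Oh_p(1) = \Oh_p(\log n)$, the right side is $\Oh_p(\log n) \cdot \oh(1/\log n) = \oh_p(1)$, completing the reduction.

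The delicate point is the matching of scales: the maximum has typical size of order $\log n$, whereas pointwise slow variation of $\ell$ provides no usable control at that scale. Condition~\ref{cond:loc} supplies exactly the extra factor $1/\log n$ needed to absorb $M_n/n^\gamma$, and the bound must hold \emph{uniformly} over the random window containing the argmax; this is why the upgrade from Condition~\ref{cond:loc} to Lemma~\ref{lem:cond_equiv} is essential, and why the sequence $\epsilon(n)\to 0$ must be chosen by diagonalization so that both the probabilistic localization of $\tau_n, \tau_n^*$ and the deterministic slow-variation bound $\eta_n \log n \to 0$ hold simultaneously.
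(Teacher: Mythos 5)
Your proof is correct and follows essentially the same route as the paper: localize $\tau_n$ and $\tau_n^*$ to a window $[n(1-\epsilon_n), n]$ with $\epsilon_n \to 0$ chosen by diagonalization, apply Lemma~\ref{lem:cond_equiv} to get a uniform $\oh(1/\log n)$ bound on $\lvert \ell(i)/\ell(n) - 1\rvert$ over that window, and absorb the $\Oh_p(\log n)$ size of $M_n/n^\gamma$. The only cosmetic difference is that you phrase the comparison as a termwise sandwich while the paper bounds $\lvert \max_i f_i - \max_i g_i\rvert \le \max_i \lvert f_i - g_i\rvert$ directly; both give the same estimate $\eta_n \cdot M_n/n^\gamma$.
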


\begin{proof}
Since $(\tau_n^* \wedge \tau_n) / n = 1 + \oh_p(1)$ as $n \to \infty$ by Lemma~\ref{lem:tauto1} and Proposition~\ref{prop:loc}, we can find $\epsilon_n > 0$ such that $\epsilon_n \to 0$ and $\Prob[ \tau_n^* \wedge \tau_n > n(1 - \epsilon_n)] \to 1$ as $n \to \infty$. 
Hence with arbitrarily high probability we have
\begin{align*}
  \left\lvert
    \frac{M_n^*}{v(n)} - \frac{M_n}{n^\gamma}
  \right\rvert
  &=
  \left\lvert
    \max_{n(1-\epsilon_n)<i\leq n}
    \left\{ \frac{i^\gamma \ell(i)}{n^\gamma \ell(n)} X_i \right\}
    -
    \max_{n(1-\epsilon_n)<i\leq n}
    \left\{ \frac{i^\gamma}{n^\gamma} X_i \right\}
  \right\rvert
  \\
  &\le
  \max_{n(1-\epsilon_n)<i\leq n}
  \left\{
    \left\lvert
      \frac{\ell(i)}{\ell(n)} - 1
    \right\rvert
    \frac{i^\gamma}{n^\gamma} X_i
  \right\}
  \le
  \frac{M_n}{n^\gamma}
  \sup_{-\epsilon_n<x\leq 0}
  \left\lvert
    \frac{\ell(n(1+x))}{\ell(n)} - 1
  \right\rvert
  .
\end{align*}
Lemma~\ref{lem:cond_equiv} and Lemma~\ref{lem:Mz1z2} show that $M_n^*/v(n)=M_n/n^\gamma+\oh_p(1)$ completing the proof.
\end{proof}

\begin{proposition}
Let $v \in \RV_\gamma$ for some $\gamma > 0$ and put $\ell(t) = t^{-\gamma} v(t)$. If $\ell$ satisfies Condition~\ref{cond:loc} then Proposition~\ref{prop:loc} and Proposition~\ref{prop:point_discrete} hold with $M^*_n, \tau^*_n, v(i),v(n)$ in place of $M_n,\tau_n,i^\gamma,n^\gamma$.
\end{proposition}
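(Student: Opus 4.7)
The plan is to prove the point-process analogue of Proposition~\ref{prop:point_discrete} first; the joint-distribution statement in the analogue of Proposition~\ref{prop:loc} will then follow by the continuous mapping theorem applied to the same max functional $\sum_i\delta_{(x_i,z_i)}\mapsto (\max x_i,\arg\max)$ used to derive Theorem~\ref{thm:main} from Theorem~\ref{thm:point_proc}. The central task is therefore to couple the weighted point process $\xi^*_n = \sum_i \delta_{(v(i)X_i/v(n)-\beta_n,\,(n-i)\log(n)/n)}$ with the power-weight version $\hat\xi_n$ from Proposition~\ref{prop:point_discrete} and to show that the two processes differ only by a vanishing error on any compact rectangle.

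The coupling rests on the decomposition $v(i)X_i/v(n) = (i^\gamma X_i/n^\gamma)\cdot \ell(i)/\ell(n)$. Indices $i$ whose second coordinate $(n-i)\log(n)/n$ lies in a compact set $[z_1,z_2]$ satisfy $i/n = 1 + x_{n,i}$ with $|x_{n,i}| \leq (|z_1|\vee|z_2|)/\log n$, and Lemma~\ref{lem:cond_equiv} gives $\log(n)\,|\ell(i)/\ell(n)-1|\to 0$ uniformly in such~$i$. When in addition the first coordinate $i^\gamma X_i/n^\gamma-\beta_n$ is bounded, one has $i^\gamma X_i/n^\gamma = \beta_n + \Oh(1) = \Oh(\log n)$, so multiplying by $\ell(i)/\ell(n)=1+\oh(1/\log n)$ and subtracting $\beta_n$ yields
\[
v(i)X_i/v(n)-\beta_n \;=\; i^\gamma X_i/n^\gamma-\beta_n + \oh(1),
\]
uniformly in the relevant indices. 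This is the exact analogue of Lemma~\ref{lem:points_close} in the present setting.

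With the coupling in hand, one follows the blueprint of the proof of Theorem~\ref{thm:point_proc}. First prove a version of Lemma~\ref{lem:bounding} truncating any set $[x,\infty]\times[z,\infty]$ to a finite rectangle $[x,x']\times[z,z']$. The $x$-truncation is handled by Lemma~\ref{lem:Mnstarz1z2}. The $z$-truncation (showing no extremal point of $\xi^*_n$ appears with $i$ bounded away from $n$) is handled by the bound $M^*_{\floor{nh}}/v(n) \leq h^\gamma\log(n)\{1+\oh_p(1)\}$ that was established in the proof of Lemma~\ref{lem:tauto1}, which stays below $\beta_n+x$ once $h$ is small enough. Kallenberg's criterion~\cite{kallenberg_SPA} then transfers the limit from $\hat\xi_n\convdistr\hat\xi$ (Proposition~\ref{prop:point_discrete}) to $\xi^*_n\convdistr \hat\xi$ via the coupling. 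Finally, the continuous mapping theorem applied to $\xi^*_n$ yields the joint convergence $(M_n^*/v(n)-\beta_n,\,(n-\tau_n^*)\log(n)/n)\convdistr(G,E_\gamma)$.

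The main obstacle is precisely the uniformity of $\ell(i)/\ell(n)\to 1$ over the extremely narrow window $i\in[n(1-c/\log n),\,n]$ that is relevant to the extremes, and this is exactly what Condition~\ref{cond:loc} is engineered to supply. Without it, the multiplicative error $(\ell(i)/\ell(n)-1)\cdot(i^\gamma X_i/n^\gamma)$ need not vanish since the second factor is of order $\log n$, and the limit could shift. Once the uniformity is furnished by Lemma~\ref{lem:cond_equiv}, the remaining steps essentially reuse the arguments of Section~\ref{sec:gaps} verbatim with $v(i)/v(n)$ in place of $(i/n)^\gamma$.
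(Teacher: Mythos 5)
Your proof is correct, but it takes a genuinely different route from the paper's. The paper's own proof is terse: it re-runs the Grigelionis null-array verification of Proposition~\ref{prop:point_discrete} and the computation underlying Lemma~\ref{lem:Mz1z2} with $v(i),v(n)$ in place of $i^\gamma,n^\gamma$, invoking Lemma~\ref{lem:RV} for the analogue of~\eqref{eq:toextend} and the~\eqref{UCT} (applied to $v(\floor{t})$) for the supremum condition in the null-array criterion. You instead transfer the already-established limit $\hat\xi_n\convdistr\hat\xi$ to the new point process $\xi^*_n$ by a coupling: the decomposition $v(i)X_i/v(n)=(i^\gamma X_i/n^\gamma)\cdot\ell(i)/\ell(n)$ together with Lemma~\ref{lem:cond_equiv} makes the two processes agree up to a uniformly $\oh(1)$ perturbation of the first coordinate over the relevant window $i\in[n(1-c/\log n),\,n(1+c/\log n)]$, and Kallenberg's criterion~\cite{kallenberg_SPA} then carries the convergence over, exactly as in the passage from Proposition~\ref{prop:point_discrete} to Theorem~\ref{thm:point_proc}. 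This is in fact the same idea the paper uses to prove Lemma~\ref{lem:Mnstarz1z2} (coupling $M_n^*/v(n)$ with $M_n/n^\gamma$) and to relate $\xi_t$ to $\hat\xi_n$ in Section~\ref{sec:gaps}; you are applying it one more time where the paper opts for a fresh Grigelionis computation. Your approach has the advantage of reusing the machinery of Lemmas~\ref{lem:points_close} and~\ref{lem:bounding} and of making the role of Condition~\ref{cond:loc} transparent (it is precisely the uniform closeness of $\ell(i)/\ell(n)$ to~$1$ at the rate $\oh(1/\log n)$ that makes the coupling work, as you note); the paper's approach avoids introducing a second coupling layer and is shorter to state. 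One small point worth flagging if you write this out in full: your truncation of $[x,\infty]\times[z,\infty]$ to a finite rectangle invokes Lemma~\ref{lem:Mnstarz1z2}, which controls $\max_{i\le n}$; when $z<0$ the compact set also contains points with $i$ slightly larger than~$n$, so you should either appeal to the analogue of Lemma~\ref{lem:Mz1z2} for $M_{\floor{n(1-z/\log n)}}^*$ or observe directly that $v(i)/v(n)\to 1$ uniformly over that range, as the coupling estimate already gives.
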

\begin{proof}One may follow the same steps as in the original proofs. In addition, for the analogue of~\eqref{eq:toextend} we use Lemma~\ref{lem:RV}, whereas the extension of Proposition~\ref{prop:point_discrete} requires showing that
\[\sup_{1\leq i\leq n(1-z/\log n)}\exp(-(x+\beta_n)v(n)/v(i))\to 0,\]
which follows from \eqref{UCT} applied to the function $v(\lfloor t\rfloor)$. 
\end{proof}


\subsection{Gaps of a Poisson process}


Let $( \mathcal{N}_t )_{t \ge 0}$ be an inhomogenous Poisson process as in the beginning of this section. 
%
As a consequence of Lemma~\ref{lem:cond_equiv}, we have that\begin{multline}
\label{eq:vloc}
  0 < \delta(t) = \oh \left( 1 / \log t \right)
  \text{ as $t \to \infty$ implies } \\
  \lim_{t \to \infty} \log(t)
  \sup_{-\delta(t) \le x \le \delta(t)}
  \left\lvert \frac{v((1+x)t)}{v(t)} - 1 \right\rvert
  = 0,  
\end{multline}
because $(1\pm\delta(t))^\gamma-1=\oh(1/\log t)$.

Let us now provide a generalization of Lemma~\ref{lem:RbyX}.
\begin{lemma}
\label{lem:approxRbyX}
If $\ell$ satisfies Condition~\ref{cond:loc}, then
\begin{equation*}
  \left|\frac{T_i}{V(i)}-1\right|\vee \left|\frac{R_i}{v(i)X_i}-1\right|=\oh(1/\log i) \quad\text{a.s.}
\end{equation*}
as $i\rightarrow\infty$.
\end{lemma}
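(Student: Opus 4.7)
The plan is to mimic the structure of Lemma~\ref{lem:RbyX}, replacing the explicit power $t^{1/\a}$ by $V(t)$ and $t^\gamma/\a$ by $v(t)$, and absorbing the slowly varying part via the uniform estimate~\eqref{eq:vloc}. I would begin from the same starting point, namely the law of the iterated logarithm for the partial sum process $T_i' = \sum_{j\le i} X_j$, which yields
\[
  \lvert T_i'/i - 1 \rvert \log i \to 0 \qquad \text{a.s.}\quad\text{as }i\to\infty,
\]
exactly as in~\eqref{eq:itlog}. All subsequent estimates are carried out on a single probability-one event on which this convergence holds.

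For the first bound, I would write $T_i = V(T_i')$ and apply the mean value theorem to $V$ between $i$ and $T_i'$, giving $T_i - V(i) = v(\xi_i)(T_i' - i)$ for some $\xi_i$ lying between $i$ and $T_i'$. Since $\lvert T_i'/i - 1\rvert = \oh(1/\log i)$ a.s., the same holds for $\lvert \xi_i/i - 1\rvert$, so by~\eqref{eq:vloc} applied at scale $t=i$ we get $v(\xi_i)/v(i) = 1 + \oh(1/\log i)$ a.s. Combined with Karamata's theorem, which gives $V(i) \sim v(i)\, i/(1+\gamma) = \a\, i\, v(i)$, this leads to
\[
  \left\lvert \frac{T_i}{V(i)} - 1 \right\rvert
  = \frac{v(\xi_i)}{V(i)} \lvert T_i' - i \rvert
  = \Oh(1/i)\cdot \oh(i/\log i)
  = \oh(1/\log i)\quad\text{a.s.}
\]

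For the second bound, I would proceed analogously to~\eqref{eq:XT2R}: the mean value theorem applied to $V$ on $[T_{i-1}', T_i']$ gives
\[
  R_i = V(T_i') - V(T_{i-1}') = v(\theta_i)\, X_i
\]
for some $\theta_i \in (T_{i-1}', T_i')$, so that $R_i/(v(i)X_i) - 1 = v(\theta_i)/v(i) - 1$. The key point is that $\theta_i/i \to 1$ fast enough: from $T_i'/i = 1+\oh(1/\log i)$ and $T_{i-1}'/(i-1) = 1+\oh(1/\log i)$ and the observation $1/i = \oh(1/\log i)$, both endpoints satisfy $T_i'/i, T_{i-1}'/i = 1 + \oh(1/\log i)$ a.s., and hence so does $\theta_i/i$. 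Applying~\eqref{eq:vloc} once more yields $v(\theta_i)/v(i) - 1 = \oh(1/\log i)$ a.s., which finishes the proof.

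The only step that requires real care is ensuring the rate in \eqref{eq:vloc} actually kicks in, i.e.\ that the random perturbations $\xi_i/i - 1$ and $\theta_i/i - 1$ are $\oh(1/\log i)$ \emph{uniformly on a set of full probability}; once the LIL is in hand this is automatic, so I do not anticipate a genuine obstacle. The main conceptual point—already prepared for by Lemma~\ref{lem:cond_equiv} and the subsequent observation~\eqref{eq:vloc}—is that Condition~\ref{cond:loc} is precisely strong enough to upgrade pointwise slow variation to the two-sided, $\log t$-weighted uniform control that the mean value theorem produces.
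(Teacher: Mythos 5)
Your proof follows the paper's route closely: both reduce to the LIL estimate~\eqref{eq:itlog}, then transfer the $\oh(1/\log i)$ rate from $T_i'/i$ to $T_i/V(i)$ and from $T'_i-T'_{i-1}$ to $R_i/(v(i)X_i)$ via the rate-strengthened uniform-convergence estimate~\eqref{eq:vloc}. The one technical wrinkle is that you invoke the mean-value theorem for $V$ to produce the intermediate points $\xi_i,\theta_i$, which presupposes pointwise differentiability of $V$; since $\lambda\in\RV_{\a-1}$ need only be measurable, $V=\Lambda^{-1}$ is in general only absolutely continuous, which is why the paper instead writes $R_i=\int_{T'_{i-1}}^{T'_i}v(t)\,\D t$ and bounds
\[
\left|\frac{R_i}{v(i)X_i}-1\right|\le\sup_{T'_{i-1}/i-1\le x\le T'_i/i-1}\left|\frac{v(i(1+x))}{v(i)}-1\right|,
\]
and for the first part uses monotonicity of $V$ together with Lemma~\ref{lem:RV} rather than MVT plus Karamata. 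This is a cosmetic rather than substantive difference, and the key estimate is identical to the paper's.
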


\begin{proof}
From the monotonicity of $V$ and from~\eqref{eq:itlog}, we find that a.s.
\[T_i/V(i)-1=V(T'_i)/V(i)-1\leq V(i(1+\oh(1/\log i)))/V(i)-1,\]
which is $\oh(1/\log i)$ by Lemma~\ref{lem:RV}. A similar bound from below completes the proof of the first part.

For the second part we write
\begin{equation}\label{eq:R_RV}
  R_i 
  = T_i - T_{i-1}
  = V(T_i') - V(T_{i-1}')
  = \int_{T_{i-1}'}^{T_i'} v(t) \, \D t,
\end{equation}
so that
\[
  R_i - v(i) X_i
  =
  \int_{T_{i-1}'}^{T_i'} \{ v(t) - v(i) \} \, \D t.
\]
But then
\[
  \left\lvert \frac{R_i}{v(i) X_i} - 1 \right\rvert
  \le
  \frac{1}{X_i}
  \int_{T_{i-1}'}^{T_i'}
    \left\lvert
      \frac{v(t)}{v(i)} - 1
    \right\rvert \,
  \D t
  \le
  \sup_{T_{i-1}'/i-1 \le x \le T'_i/i-1}
    \left\lvert
      \frac{v(i(1+x))}{v(i)} - 1
    \right\rvert.  
\]
From~\eqref{eq:itlog} and \eqref{eq:vloc} we find that the last term is $\oh(1/\log i)$ a.s.\ as required.
\end{proof}

\begin{theorem}\label{thm:RV}
If the rate function $\lambda\in \RV_{\a-1}$, where $\a\in(0,1)$, is such that $\ell$ satisfies Condition~\ref{cond:loc}, then Theorem~\ref{thm:main} and Theorem~\ref{thm:point_proc} hold with such $\lambda_t$ and 
\[b_t=\log\Lambda(t)-\log\log t-\log(1-\a).\]
\end{theorem}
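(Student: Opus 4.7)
The proof follows the architecture of Theorem~\ref{thm:point_proc}: approximate the points of $\xi_t$ by points of a rescaled discrete process and conclude via Kallenberg's criterion. The relevant discrete process is the regularly varying version of $\hat\xi_n$ (Proposition~\ref{prop:point_discrete} extended in Section~\ref{sec:reg_var}), with points $\bigl(v(i)X_i/v(n) - \beta_n,\;(1-i/n)\log(n)/\a^2\bigr)$; as in Remark~\ref{rem:rescaling}, rescaling the second coordinate by $1/\a^2$ sends the limiting intensity to $\mu$ because $\gamma\a^2 = \a(1-\a)$.

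Set $n = \lceil \Lambda(t) \rceil$. First I verify $b_t = \beta_n + \oh(1)$: writing $\Lambda(t) = t^\a \ell_\Lambda(t)$ with $\log \ell_\Lambda(t) = \oh(\log t)$, one has $\log n = \log \Lambda(t) + \oh(1)$ and $\log \log n = \log \log t + \log \a + \oh(1)$, whence
\[
  \beta_n = \log n - \log \gamma - \log \log n = \log \Lambda(t) - \log(\a\gamma) - \log \log t + \oh(1) = b_t + \oh(1),
\]
since $\a\gamma = 1-\a$.

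Next I adapt Lemma~\ref{lem:points_close} with $u_i(t)$ as before and $v_i(t) = \bigl(v(i)X_i/v(n) - \beta_n,\;(1-i/n)\log(n)/\a^2\bigr)$. On the relevant index sets, where $i/n \to 1$ uniformly, Lemma~\ref{lem:approxRbyX} gives $R_i = v(i)X_i(1+\oh(1/\log t))$ and $T_i = V(i)(1+\oh(1/\log t))$ almost surely. Multiplying by $\lambda_t = 1/v(\Lambda(t))$ and using~\eqref{eq:vloc} to pass from $v(\Lambda(t))$ to $v(n)$ yields $\lambda_t R_i - b_t = v(i)X_i/v(n) - \beta_n + \oh(1)$ on bounded sets. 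For the time coordinate, combining $T_i/t = V(i)/V(n)\,(1 + \oh(1/\log t))$ (via the $V$-analogue of~\eqref{eq:vloc}, valid since $\ell_V \sim \a\ell$) with the expansion $1 - V(i)/V(n) = \a^{-1}(1-i/n) + \oh(1/\log n)$ and $\log t = \a^{-1}\log n + \oh(\log n)$ gives $(1-T_i/t)\log t = \a^{-2}(1-i/n)\log n + \oh(1)$ on bounded sets.

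For the bounding Lemma~\ref{lem:bounding}, replace $i^\gamma$ by $v(i)$: since $v \in \RV_\gamma$ with $\gamma > 0$, the UCT gives $\sup_{s \le T'_{\lceil n/2 \rceil}} v(s)/v(n) \to 2^{-\gamma} < 1$ a.s., so~\eqref{eq:R_RV} implies $\max_{i \le n/2} R_i \le v(T'_{\lceil n/2 \rceil}) \max_{i \le n/2} X_i$, hence $\lambda_t \max_{i \le n/2} R_i - b_t \to -\infty$ in probability. With these ingredients, Kallenberg's criterion is applied exactly as in the proof of Theorem~\ref{thm:point_proc}. The principal subtlety — and where Condition~\ref{cond:loc} genuinely bites — is the time-coordinate computation, where the $\log t$ factor amplifies any slack in the slowly varying ratios $\ell(i)/\ell(n)$ and $\ell_V(i)/\ell_V(n)$; the strengthened form of Lemma~\ref{lem:cond_equiv} and its $v$-version~\eqref{eq:vloc} are precisely calibrated to absorb this.
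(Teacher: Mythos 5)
Your proof follows the paper's architecture essentially line by line: set $n=\lceil\Lambda(t)\rceil$, verify $b_t=\beta_n+\oh(1)$, adapt Lemma~\ref{lem:points_close} via Lemma~\ref{lem:approxRbyX} and \eqref{eq:vloc}, adapt Lemma~\ref{lem:bounding}, and conclude via Kallenberg's criterion with the $1/\a^2$ rescaling of the second coordinate. Two small points deserve attention. First, you justify replacing $V(\Lambda(t))$ by $V(n)$ with a $(1+\oh(1/\log t))$ error by invoking a ``$V$-analogue of \eqref{eq:vloc}, valid since $\ell_V\sim\a\ell$''; but asymptotic equivalence of slowly varying parts does not by itself transfer Condition~\ref{cond:loc} from $\ell$ to $\ell_V$. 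The paper instead obtains this step (and the subsequent expansion of $1-V(i)/V(n)$) from Lemma~\ref{lem:RV} applied with $f=V$ and $f'=v\in\RV_{1/\a-1}$, which requires no condition on $\ell_V$ whatsoever. Second, in the bounding step your appeal to \eqref{UCT} to conclude $\sup_{s\le T'_{\lceil n/2\rceil}}v(s)/v(n)\to 2^{-\gamma}$ tacitly assumes $v$ is locally bounded on $[0,b]$, which is not guaranteed for a general $v\in\RV_\gamma$ (also your displayed bound $v(T'_{\lceil n/2\rceil})\max_i X_i$ implicitly uses monotonicity of $v$, which is not assumed; the correct majorant is $\sup_{s\le T'_{\lceil n/2\rceil}}v(s)\cdot\max_i X_i$). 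The paper handles local boundedness by discarding finitely many initial indices $i\le j$ with $T'_j>a$ for a suitable level~$a$, noting that their total contribution $T_j/v(n)\to 0$ a.s. Neither issue changes the overall argument, but both need to be repaired to make the proof airtight.
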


\begin{proof}
In this more general setting we use $n=\lceil\Lambda(t)\rceil$ and so $\log n\sim\a\log t$. 
Concerning the generalization of Lemma~\ref{lem:points_close} we only need to show that~\eqref{lim1} and~\eqref{lim2} hold when adapted according to Lemma~\ref{lem:approxRbyX}.
That is,
\begin{align*}
&\lambda_t v(i)X_i(1+\oh(1/\log t))-b_t=(v(i)X_i/v(n)-\beta_n)(1+\oh(1/\log t))+\oh(1),\\
&\a^2\log(t)(1-V(i)/t)=\log(n)(1-i/n)(1+\oh(1))+\oh(1).
\end{align*}
This hinges on the following: (i) $\lambda_t v(n)=1+\oh(1/\log t)$, (ii) $b_t=\beta_n+\oh(1)$, and (iii) $\a(1-V(i)/t)=(1-i/n)(1+\oh(1))+\oh(1/\log t)$ uniformly in $i\in I_v(t)$. Identity (i) holds, because
by \eqref{eq:vloc}
\begin{equation}\label{eq:lambdaRV}
  \lambda_t \, v(\ceil{\Lambda(t)})
  =
  \frac{v(\ceil{\Lambda(t)})}{v(\Lambda(t))}
  =
  1 + \oh(1/\log t),
  \qquad \text{ as }t \to \infty,
\end{equation}
where, indeed, $\abs{ \ceil{\Lambda(t)} / \Lambda(t) - 1 } < 1/\Lambda(t) = \oh(1 / \log t)$. Identity (ii) is rather obvious, whereas concerning (iii) we have
\[\a(1-V(i)/t)=\a(1-V(i)/V(\Lambda(t)))=\a(1-V(i)/V(n))(1+\oh(1))+\oh(1/\log t),\]
but $1-V(n(1+i/n-1))/V(n)=(1-i/n)(1+\oh(1))/\a$
by a slight extension of Lemma~\ref{lem:RV} upon noting that $i/n-1=\oh(1)$ uniformly in $i$ concerned.

It is left to show that Lemma~\ref{lem:bounding} still holds, and the only non-trivial step is to show that
\begin{equation}\label{eq:infty}
\lambda_t \max_{i\leq n/2} R_i-b_t\rightarrow-\infty
\end{equation}
in probability and hence a.s., which we obtain in the following.
Observe from~\eqref{eq:R_RV} that 
\begin{align*}
\max_{i\leq n/2}R_i\leq \max_{i\leq n/2}X_i\sup_{T_{i-1}'\leq t\leq T_i'}v(t)\leq \max_{i\leq n/2}X_i\sup_{t\leq T_{\lceil n/2\rceil}'}v(t),
\end{align*}
where $\max_{i\leq n/2}X_i=\log n(1+\Oh_p(1))$ and concerning the latter term we have
\[\sup_{t\leq T_{\lceil n/2\rceil}'}v(t)/v(n)=\sup_{t\leq n/2(1+\oh(1))}\frac{v(t)}{v(n)}\rightarrow 2^{-\gamma}\quad \text{a.s.}\]
by \eqref{UCT}, provided that $v$ is locally bounded. This shows that $\max_{i\leq n/2}R_i/v(n)\leq 2^{-\gamma}\log n(1+\Oh_p(1))$ and hence~\eqref{eq:infty} holds in view of~\eqref{eq:lambdaRV}.
In general, however, we only have that $v$ is bounded on $[a,b]$ for some $a$ and all~$b$. With arbitrarily high probability we may choose an index $j$ such that $T_j'>a$, and then the above steps can be repeated for $\max_{j<i\leq n/2}R_i$, whereas obviously $T_j/v(n)\rightarrow 0$ a.s.
\end{proof}

\subsection{Comments on the assumed condition}
\label{sec:condition}

Let us note that virtually all standard examples of slowly varying functions,  e.g.\ $\log^u t,u\in\R$ and $\log\log t$, satisfy Condition~\ref{cond:loc}. This can be easily checked using the following result.

\begin{lemma}
\label{lem:condition}
Condition~\eqref{cond:loc} holds true if $\ell\in \RV_0$ is eventually differentiable and
\begin{equation}
\label{eq:cond_sufficient}
  \frac{t \, \ell'(t)}{\ell(t)}
  =
  \Oh(1/\log t),
  \qquad \text{as } t \to \infty.
\end{equation}
\end{lemma}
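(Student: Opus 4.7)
The plan is to represent the relative increment $\ell([1+\epsilon(t)]t)/\ell(t) - 1$ via the fundamental theorem of calculus as an integral over the shrinking interval $[1, 1+\epsilon(t)]$, and then to bound the integrand by $\Oh(1/\log t)$ uniformly using the hypothesis~\eqref{eq:cond_sufficient}. Since the interval has length $\epsilon(t) \to 0$, the resulting estimate will be $\Oh(\epsilon(t)/\log t)$, which is strictly stronger than what Condition~\ref{cond:loc} demands, and multiplication by $\log t$ finishes the proof.

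In detail: for $t$ large enough that $\ell$ is differentiable on $[t, \infty)$, the fundamental theorem of calculus combined with the change of variable $s = ut$ yields
\[
\frac{\ell([1+\epsilon(t)]t)}{\ell(t)} - 1
= \int_1^{1+\epsilon(t)} \frac{ut\,\ell'(ut)}{\ell(ut)}\cdot\frac{\ell(ut)}{\ell(t)}\cdot\frac{\D u}{u}.
\]
The hypothesis~\eqref{eq:cond_sufficient} bounds the first factor by $C/\log(ut)$, which is $\Oh(1/\log t)$ uniformly in $u \in [1, 1+\epsilon(t)]$ because $\log(ut) = \log(t) + \log(u) \sim \log t$ in that range. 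The Uniform Convergence Theorem~\eqref{UCT} applied to $\ell \in \RV_0$ shows that the second factor converges to $u^0 = 1$ uniformly on $[1, 1+\epsilon(t)]$, and is therefore bounded for $t$ large; the last factor $1/u$ is bounded as well. Integrating over an interval of length $\epsilon(t)$ then yields $|\ell([1+\epsilon(t)]t)/\ell(t) - 1| = \Oh(\epsilon(t)/\log t)$, so that $\log(t)\cdot|\ell([1+\epsilon(t)]t)/\ell(t) - 1| = \Oh(\epsilon(t)) \to 0$.

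The only step that might be seen as an obstacle is the uniformity of the bound $ut\,\ell'(ut)/\ell(ut) = \Oh(1/\log t)$ for $u$ in the shrinking neighbourhood of $1$; but this is immediate since $\log(ut) = \log(t)(1 + \oh(1))$ uniformly there. Note also that this verifies Condition~\ref{cond:loc} only for positive perturbations, but by Lemma~\ref{lem:cond_equiv} this suffices; alternatively, the same integral representation handles $\epsilon(t) < 0$ with no modification beyond swapping the limits of integration. Beyond invoking~\eqref{UCT} and the hypothesis, no new idea is required.
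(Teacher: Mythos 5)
Your proof is correct and follows essentially the same strategy as the paper's: estimate the increment of $\ell$ over $[t,(1+\epsilon(t))t]$ by integrating (or, in the paper, by applying the mean value theorem to) $\ell'$, pull out a factor that is $\Oh(1/\log t)$ from hypothesis~\eqref{eq:cond_sufficient}, and control the residual ratio via~\eqref{UCT}. The only difference is cosmetic: you use the fundamental theorem of calculus and factor the integrand as $\frac{ut\ell'(ut)}{\ell(ut)}\cdot\frac{\ell(ut)}{\ell(t)}\cdot\frac{1}{u}$, applying~\eqref{UCT} directly to $\ell$, whereas the paper applies the mean value theorem and then factors $\ell'(s)$ as $\frac{s\ell'(s)\log s}{\ell(s)}\cdot\frac{\ell(s)}{s\log s}$, applying~\eqref{UCT} to the auxiliary regularly varying function $t\mapsto\ell(t)/(t\log t)$; your decomposition is marginally more direct. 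One small nit: Condition~\ref{cond:loc} as stated concerns only positive $\epsilon(t)$, so your proof already establishes it in full and the appeal to Lemma~\ref{lem:cond_equiv} at the end is unnecessary (that lemma upgrades the one-sided Condition~\ref{cond:loc} to the two-sided~\eqref{eq:cond_equiv}, which is not what is being proved here).
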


\begin{proof}
Using the mean value theorem we have
\[  |\ell([1+\epsilon(t)]t)-\ell(t)|\leq t\epsilon(t)\sup_{t \le s \le [1+\epsilon(t)]t}
  |\ell'(s)|.\]
Moreover, 
\[
  \sup_{t \le s \le [1+\epsilon(t)]t} |\ell'(s)|
  \leq 
  \sup_{t \le s \le [1+\epsilon(t)]t}
    \left|\frac{s \, \ell'(s)\log s}{\ell(s)}\right|
  \sup_{t \le s \le [1+\epsilon(t)]t}
    \left|\frac{\ell(s)}{s\log s}\right|,
\]
where the first term on the right hand side is $\Oh(1)$ according to~\eqref{eq:cond_sufficient}.
Hence Condition~\eqref{cond:loc} holds if
\[\sup_{t \le s \le [1+\epsilon(t)]t}\frac{t\log(t)\ell(s)}{s\log(s)\ell(t)}\]
is bounded for large $t$, but this term tends to~1 by \eqref{UCT} applied to the regularly varying function $(t\log(t))^{-1}\ell(t)$.
\end{proof}

Concerning Theorem~\ref{thm:RV} it is more useful to express the sufficient condition of Lemma~\ref{lem:condition} using the slowly varying function associated with the rate function $\lambda_t$ instead of that associated with~$v(t)$, which is the content of the next result. 

\begin{proposition}\label{prop:cond}
Let $\lambda_t=t^{\a-1}\ell_\lambda(t)$ with $\a\in(0,1)$ and $\ell_\lambda\in\RV_0$. 
If $\ell_\lambda$ is eventually continuously differentiable and if
\[
  \frac{t \, \ell_\lambda'(t)}{\ell_\lambda(t)}=\Oh(1/\log t),
  \qquad \text{as } t \to \infty,
\]
then Condition~\ref{cond:loc} is satisfied and the result of Theorem~\ref{thm:RV} holds true.
\end{proposition}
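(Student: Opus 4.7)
The plan is to invoke Lemma~\ref{lem:condition} applied to the slowly varying factor $\ell(t) = t^{-\gamma} v(t)$ of $v \in \RV_\gamma$. Since $\ell_\lambda$ is eventually $C^1$ so is $\lambda$, whence $\Lambda$ is eventually $C^2$ with $\Lambda' = \lambda > 0$; by the inverse function theorem $V = \Lambda^{-1}$ is eventually $C^2$ and $v = V'$ is eventually $C^1$, so $\ell$ is eventually differentiable. It therefore remains to verify the quantitative bound $t\ell'(t)/\ell(t) = \Oh(1/\log t)$, after which Lemma~\ref{lem:condition} delivers Condition~\ref{cond:loc} and Theorem~\ref{thm:RV} concludes.

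Starting from $v(t) = 1/\lambda(V(t))$ with $V'(t) = v(t)$ and $\lambda(t) = t^{\alpha-1}\ell_\lambda(t)$, logarithmic differentiation gives
\[
  \frac{v'(t)}{v(t)}
  = -v(t)\left[\frac{\alpha-1}{V(t)} + \frac{\ell_\lambda'(V(t))}{\ell_\lambda(V(t))}\right]
  = \frac{v(t)}{V(t)}\left[(1-\alpha) - \frac{V(t)\ell_\lambda'(V(t))}{\ell_\lambda(V(t))}\right],
\]
so that $t\ell'(t)/\ell(t) = tv'(t)/v(t) - \gamma$. Because $V \in \RV_{1/\alpha}$ gives $\log V(t) \sim \alpha^{-1}\log t$, the hypothesis on $\ell_\lambda$ transfers to $V(t)\ell_\lambda'(V(t))/\ell_\lambda(V(t)) = \Oh(1/\log t)$ and the bracket becomes $(1-\alpha) + \Oh(1/\log t)$. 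Hence it suffices to establish the sharpened Karamata-type identity
\[
  \frac{t\,v(t)}{V(t)} = \frac{1}{\alpha}\bigl(1 + \Oh(1/\log t)\bigr),
\]
since multiplying these two expansions yields $tv'(t)/v(t) = \gamma + \Oh(1/\log t)$.

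To prove the identity I would integrate by parts: for $t_0$ large enough that $\ell_\lambda \in C^1([t_0,\infty))$,
\[
  \Lambda(t)
  = \frac{t^\alpha\ell_\lambda(t)}{\alpha} - \frac{1}{\alpha}\int_{t_0}^t s^\alpha\ell_\lambda'(s)\,\D s + \text{const.}
\]
The hypothesis $s\ell_\lambda'(s)/\ell_\lambda(s) = \Oh(1/\log s)$ yields $|s^\alpha\ell_\lambda'(s)| \leq C\lambda(s)/\log s$ for $s \ge t_0$, and since $\lambda(\cdot)/\log(\cdot) \in \RV_{\alpha-1}$ has index strictly above $-1$, classical Karamata gives $\int_{t_0}^t \lambda(s)/\log s\,\D s \sim \Lambda(t)/\log t$, so that $\Lambda(t) = t\lambda(t)/\alpha \cdot (1 + \Oh(1/\log t))$. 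Substituting $t \mapsto V(t)$ and using $\Lambda(V(t)) = t$ together with $\lambda(V(t)) = 1/v(t)$ converts this directly into the displayed identity.

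The principal obstacle is precisely this sharpened Karamata step: the bare asymptotic $\Lambda(t) \sim t\lambda(t)/\alpha$ is too crude for our purposes, and the quantitative hypothesis on $\ell_\lambda'$ must be exploited through integration by parts to extract the $\Oh(1/\log t)$ correction that ultimately feeds into Lemma~\ref{lem:condition}. Every remaining step is either a direct consequence of the uniform convergence theorem \eqref{UCT} for regularly varying functions or a routine computation.
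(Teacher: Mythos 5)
Your proposal is correct and follows essentially the same route as the paper: both apply Lemma~\ref{lem:condition} to $\ell = t^{-\gamma}v$, relate $\ell'/\ell$ to $\ell_\lambda'/\ell_\lambda$ by the chain rule through $v = 1/\lambda\circ V$, and then supply the needed $\Oh(1/\log t)$ refinement of $\Lambda(t) \sim t\lambda(t)/\alpha$ via the same integration by parts combined with Karamata's theorem. The only cosmetic difference is that the paper transposes the target estimate into the form $\Lambda(t)\lambda'_t/\lambda_t^2 = -\gamma + \Oh(1/\log t)$ before splitting it into two factors, whereas you work directly with $tv'(t)/v(t)$ and substitute $t\mapsto V(t)$ at the end; the underlying computations coincide.
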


\begin{proof}
First, we show that~\eqref{eq:cond_sufficient} is equivalent to
\begin{equation}
\label{eq:cond2}
  \frac{\Lambda(t) \, \lambda'_t}{\lambda_t^2}
  =
  -\gamma+\Oh(1/\log t).
\end{equation}
Since $v(t)=t^\gamma \ell(t)$ and $v(t)=1/\lambda_{V(t)}$ we find that
\[
  \frac{t \, \ell'(t)}{\ell(t)}
  =
  -\gamma-t \, \lambda'_{V(t)}/\lambda^2_{V(t)}.
\]
Plugging in $t=\Lambda(t)$ and noting that $\log \Lambda(t)\sim \a\log t$ we confirm the equivalence.

Thus it is sufficient to establish that 
\begin{align*}
  \frac{\lambda'_t \, t}{\lambda_t}
  &=
  \a-1+\Oh(1/\log t), 
  &
  \frac{\Lambda(t)}{\lambda_t t}
  &=
  1/\a+\Oh(1/\log t).
\end{align*}
The left statement is a result of a simple calculation, and so we concentrate on the right statement.
Using integration by parts we find
\[
  \Lambda(t)
  =
  \int_c^t x^{\a-1}\ell_\lambda(x) \, \D x\ 
  =\ 
  \frac{1}{\a} t^\a \, \ell_\lambda(t)
  + \Oh(1)-\frac{1}{\a} \int_c^t x^{\a} \, \ell'_\lambda(x) \, \D x
\]
for all $t>c$ and some level~$c$ (to be fixed high enough). Hence it is left to show that 
\begin{equation}
\label{eq:withKaramata}
  \frac{\int_c^t x^{\a} \, \ell'_\lambda(x) \, \D x}{t^\a \, \ell_\lambda(t)}
  \log t
  = \Oh(1).
\end{equation}
From our assumption we see that $|\ell_\lambda'(x)|\leq C\ell_\lambda(x)/(x\log x)$ for large enough~$x$. Finally, by Karamata's theorem~\cite[Prop.\ 1.5.8]{bingham1989regular}	we have
\[
  \frac%
    {\int_c^t x^{\a} \, C \, \ell_\lambda(x)/(x\log x) \, \D x}%
    {t^\a\,\ell_\lambda(t)/\log t}
  \to C\a,
\]
because $\ell_\lambda(t)/\log t\in RV_0$, and so~\eqref{eq:withKaramata} follows.
\end{proof}

As mentioned above, virtually all standard examples of slowly varying functions satisfy the assumption of Proposition~\ref{prop:cond}. In particular, so do $\ell_\lambda(t)=a\log^u t$ and $\ell_\lambda(t)=a\log\log t$ for $a>0,u\in\mathbb R$. Hence $\lambda_t=a t^{\a-1}\log^u t$ and $\lambda_t=a t^{\a-1}\log\log t$ are examples of rate functions to which the asymptotic results in this work apply. For a simple example that does not satisfy the assumption of Proposition~\ref{prop:cond}, consider $\ell_\lambda(t)=\e^{(\log\log t)^2}=(\log t)^{\log\log t}$. Indeed, this is a slowly varying function for which $\log(t) \, t \, \ell_\lambda'(t)/\ell_\lambda(t)=2\log\log t$ is unbounded.

\appendix
\section{}\label{appendix}

\begin{proof}[Proof of Lemma~\ref{lem:cond_equiv}]
It is clearly sufficient to show that Condition~\ref{cond:loc} implies~\eqref{eq:cond_equiv}. Firstly, from Condition~\ref{cond:loc} we have that
\[\log t\left(\frac{\ell([1-\epsilon(t)]t)}{\ell(t)} - 1\right)=-\log t\left(\frac{\ell([1+\hat\epsilon(\hat t)]\hat t)}{\ell(\hat t)} - 1\right)\frac{\ell([1-\epsilon(t)]t)}{\ell(t)}\to 0,\]
where $\hat t=(1-\epsilon(t))t$ and $\hat\epsilon(\hat t)=\epsilon(t)/(1+\epsilon(t))$. Next, for any $\epsilon>0$ and any large~$t$ we can choose $x(t)\in[-\epsilon(t),\epsilon(t)]$ such that 
\[\left| \frac{\ell([1+x(t)]t)}{\ell(t)} - 1 \right|+\epsilon/\log t>\sup_{-\epsilon(t)\leq x\leq \epsilon(t)}\left| \frac{\ell([1+x]t)}{\ell(t)} - 1 \right|.\]
But the term on the left when multiplied by $\log t$ must converge to $\epsilon$, because $x(t)\rightarrow 0$. The limit result in~\eqref{eq:cond_equiv} follows since $\epsilon>0$ is arbitrary.
\end{proof}

The following technical result concerning regularly varying functions may well exist in the literature.
\begin{lemma}
\label{lem:RV}
Let $f$ be a positive, increasing, and absolutely continuous function such that its Radon--Nikodym derivative $f'$ is in $\RV_{\tau-1}$ for some $\tau > 0$. If $x_t \to 0$ as $t \to \infty$, then
\[
  \frac{f(t(1+x_t))}{f(t)} - 1
  = \tau \, x_t \{ 1 + \oh(1) \},
  \qquad \text{as $t \to \infty$}.
\]
\end{lemma}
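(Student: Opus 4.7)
The plan is to express $f(t(1+x_t)) - f(t)$ as an integral of $f'$ and then combine two classical regular-variation tools: the Uniform Convergence Theorem applied to $f' \in \RV_{\tau-1}$ and Karamata's theorem, which for $\tau > 0$ gives $t f'(t)/f(t) \to \tau$ as $t \to \infty$.

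Concretely, using the absolute continuity of $f$ and the substitution $s = tu$, I would write
\[
f(t(1+x_t)) - f(t) = \int_t^{t(1+x_t)} f'(s) \, \D s = t f'(t) \int_1^{1+x_t} \frac{f'(tu)}{f'(t)} \, \D u,
\]
with the integral interpreted with the opposite sign when $x_t < 0$. Since $f' \in \RV_{\tau-1}$ and $x_t \to 0$, the integrand $f'(tu)/f'(t)$ tends to $u^{\tau-1}$ uniformly for $u$ in any fixed compact neighbourhood of $1$ by~\eqref{UCT} (for $\tau \le 1$ this is immediate; for $\tau > 1$ one can work with a locally bounded version of $f'$ on $[a,\infty)$ for some $a>0$, which leaves both $f$ and its asymptotics unchanged). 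Hence
\[
\int_1^{1+x_t} \frac{f'(tu)}{f'(t)} \, \D u = \int_1^{1+x_t} u^{\tau-1} \, \D u + \oh(\abs{x_t}) = x_t (1+\oh(1)),
\]
which yields $f(t(1+x_t)) - f(t) = t f'(t) \, x_t \, (1+\oh(1))$ as $t \to \infty$.

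To conclude, I would divide both sides by $f(t)$ and invoke Karamata's theorem (cf.~\cite[Prop.~1.5.8]{bingham1989regular}) applied to $f' \in \RV_{\tau-1}$ with $\tau>0$: since $\int_a^t f'(s)\,\D s \sim t f'(t)/\tau$ and $f(t)=f(a)+\int_a^t f'(s)\,\D s$ with $t f'(t) \in \RV_\tau$ diverging, one obtains $f(t) \sim t f'(t)/\tau$, equivalently $t f'(t)/f(t) \to \tau$. This delivers $f(t(1+x_t))/f(t) - 1 = \tau \, x_t \, (1+\oh(1))$, as required.

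The only technical subtlety I anticipate is the uniform estimate on $f'(tu)/f'(t)$ when $\tau > 1$, which is the classical local-boundedness caveat of~\eqref{UCT}; this is routine. Everything else amounts to calculus and Karamata's theorem.
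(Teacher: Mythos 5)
Your proof is correct and follows essentially the same route as the paper: write the increment as $\int_t^{t(1+x_t)} f'(s)\,\D s$, normalize by $t f'(t)$ and control the integrand via the Uniform Convergence Theorem for $f'\in\RV_{\tau-1}$, then finish with Karamata's theorem giving $t f'(t)/f(t)\to\tau$. The only cosmetic difference is that the paper compares $f'(zt)/f'(t)$ to $1$ rather than to $z^{\tau-1}$, which is equivalent since $z\to 1$; your remark on local boundedness of $f'$ for the UCT when $\tau>1$ is a fair point that the paper leaves implicit.
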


\begin{proof}
We have
\begin{align*}
  \frac{f(t(1+x_t)) - f(t)}{t \, f'(t)} - x_t
  &= \int_1^{1+x_t} \left( \frac{f'(zt)}{f'(t)} - 1 \right) \, \D z
\end{align*}
and thus
\[
  \left\lvert \frac{f(t(1+x_t)) - f(t)}{t \, f'(t)} - x_t \right\rvert
  \le \abs{ x_t } \, \sup_{\abs{z-1} \le \abs{x_t}} \left\lvert \frac{f'(zt)}{f'(t)} - 1 \right\rvert.
\]
But then
\begin{align*}
    &\left\lvert \frac{f(t(1+x_t))}{f(t)} - 1 - \tau \, x_t \right\rvert
=
  \left\lvert  \frac{t \, f'(t)}{f(t)} \frac{f(t(1+x_t)) - f(t)}{t \, f'(t)} - \tau \, x_t \right\rvert \\
  &\le
  \frac{t \, f'(t)}{f(t)}
  \left\lvert \frac{f(t(1+x_t)) - f(t)}{t \, f'(t)} - x_t \right\rvert
  +
  \left\lvert \frac{t \, f'(t)}{f(t)} - \tau \right\rvert \abs{x_t} \\
  &\le
  \left\{
    \frac{t \, f'(t)}{f(t)}
    \sup_{\abs{z-1} \le \abs{x_t}} \left\lvert \frac{f'(zt)}{f'(t)} - 1 \right\rvert
    +
    \left\lvert \frac{t \, f'(t)}{f(t)} - \tau \right\rvert
  \right\}
  \abs{x_t}.
\end{align*}
The term in curly brackets converges to zero by 
\eqref{UCT} applied to $f'$, the fact that $\lim_{t \to \infty} x_t = 0$, and the direct half of Karamata's theorem, see e.g.\ Theorem~1.5.11 in \cite{bingham1989regular}.
\end{proof}

\section*{Acknowledgments}

We gratefully acknowledge the comments and suggestions by the anonymous reviewers, who have provided additional references and pointed out various ways to improve the writing.

J.~Ivanovs acknowledges support by T.N.~Thiele Center at Aarhus University.
J.~Segers gratefully acknowledges funding by contract ``Projet d'Act\-ions de Re\-cher\-che Concert\'ees'' No.\ 12/17-045 of the ``Communaut\'e fran\c{c}aise de Belgique'' and by IAP research network Grant P7/06 of the Belgian government (Belgian Science Policy).

\end{document}